\title{Modular Golomb Rulers and Almost Difference Sets}
\author{Daniel M. Gordon
  \thanks{D.M. Gordon is with the IDA Center for Communications
    Research-La Jolla, 4320 Westerra Court, San Diego, CA 92121, USA (email: gordon@ccr-lajolla.org)}
}
\begin{document}

\maketitle

\newtheorem{question}{Question} % numbered vs unnumbered?  2004 aug 15
\newtheorem{idea}{Idea} % numbered vs unnumbered?  2004 aug 15

\newtheorem{theorem}{Theorem}
\newtheorem{lemma}[theorem]{Lemma}
\newtheorem*{conjecture}{Conjecture}
\newtheorem{remark}{Remark}
\newtheorem{claim}{Claim}
\newtheorem{corollary}[theorem]{Corollary}
\newtheorem{proposition}[theorem]{Proposition}
\newtheorem{definition}[theorem]{Definition}

\newcommand{\mgr}[1]{\mathop{{\mathsf{MGR}}({#1})}}
\newcommand{\hatt}{\mbox{$\hat{t}$}}
\def\Znum{{\mathbb{Z}}} 
\def\Qnum{{\mathbb{Q}}} 

\newcommand{\dnd}{\;\!\mid\mskip -9mu \not \;\;}

%% \RequirePackage{tikz}
%% \documentclass[sn-mathphys]{sn-jnl}% Math and Physical Sciences Reference Style
%% %\documentclass[smallextended,envcountsect,envcountsame]{svjour3}
%% \usepackage{tikz}
%% \def\Znum{{\mathbb{Z}}} % integers
%% \def\Qnum{{\mathbb{Q}}} % reals
%% \newtheorem{theorem}{Theorem}[section]
%% \newtheorem{lemma}[theorem]{Lemma}
%% \newtheorem{corollary}[theorem]{Corollary}
%% \newtheorem{proposition}[theorem]{Proposition}
%% \begin{document}
%% \title{Cyclotomy and Signed Difference Sets}
%% \author{\fnm{Daniel M.} \sur{Gordon}\email{gordon@ccr-lajolla.org}}
%% \affil{\orgname{IDA Center for Communications Research},
%%   \orgaddress{\street{4320 Westerra Court}, \city{San Diego}, \state{CA}, \postcode{92121}}}
%% \date{\today}

\setcounter{table}{0}  
\setcounter{figure}{0}  

\begin{abstract}
A $(v,k,\lambda)$-difference set in a group $G$ of order $v$ is a subset
$\{d_1, d_2, \ldots,d_k\}$ of $G$ such that $D=\sum d_i$ in the group
ring $\Znum[G]$ satisfies 
$$D D^{-1} = n + \lambda G,$$
where
$n=k-\lambda$.  In other words, the nonzero elements of $G$ all occur
exactly $\lambda$ times as differences of elements in $D$.

A $(v,k,\lambda,t)$-almost difference set has $t$ nonzero elements of
$G$ occurring $\lambda$ times, and the other $v-1-t$ occurring
$\lambda+1$ times.  When $\lambda=0$, this is equivalent to a modular
Golomb ruler.
In this paper we investigate existence questions on these objects, and
extend previous results constructing almost difference sets by adding or
removing an element from a difference set.  We also
show for which primes the octic residues, with or without zero, form
an almost difference set.
\end{abstract}

\maketitle

\section{Introduction}

Let $G$ be a finite group $G$ of order $v$, and
$\Znum[G]=\{\sum_{g\in G}a_gg\mid a_g\in \Znum\}$ denote the group ring of $G$
over $\Znum$.  
We will identify a set $D=\{d_1, d_2, \ldots,d_k\} \subset G$ with its
group ring element $\sum_{i=1}^k d_i$.
A $k$-subset $D$ of $G$ is a 
$(v,k,\lambda)$-difference set if
\begin{equation}\label{eq:ds}
D D^{-1} = n + \lambda G,  
\end{equation}
where $n=k-\lambda$.
There is a large literature on difference sets.  See, for example,
\cite{bjl} or  \cite{crcds}.

Let $G$ be a group of order $mn$, and $N$ be a normal subgroup of $G$
of order $n$.
A relative $(m,n,k,\lambda)$-difference set of $G$ relative to $N$ is
a $k$-subset $D$ of $G$ such that
$$
D D^{-1} = n + \lambda (G-N).
$$
$N$ is referred to as the {\em forbidden subgroup}.
We will need
relative difference sets in Section~\ref{sec:mgr}; see \cite{pott1996survey} for background.

In a $(v,k,\lambda)$-difference set, every nonzero element $g \in G$ occurs exactly $\lambda$
times as the difference of two elements of $D$.  If some (say $t$) of
the $g$ occur
$\lambda$ times, and the rest $\lambda+1$, then $D$ is called a
$(v,k,\lambda,t)$-{\em
  almost difference set} (ADS).  
The complement of a $(v,k,\lambda,t)$-ADS is a
$(v,v-k,v-2k+\lambda,t)$-ADS.

This definition of ADS, given in \cite{dhm2001},
generalizes two earlier definitions: Ding in
\cite{ding95}, \cite{ding1997cryptographic} specified $t=(v-1)/2$, so
that exactly half of the differences are $\lambda$.
Davis, in \cite{davis1992almost}, defined ADS to have either the
$\lambda$ or $\lambda+1$ differences being all the nonidentity
elements of a normal subgroup $N$ of $G$ (equivalently, a
$(m,n,k,\lambda,\lambda \pm 1)$-divisible difference set).

Almost difference sets have many applications in communications and
cryptography.
They have been used to construct codebooks and sequences with good
properties for CDMA systems, for example see \cite{ding2008codebooks}
and \cite{li2019more}.
Constructions of good linear error-correcting codes from ADS are given
in \cite{ding} and \cite{heng2022projective}.
ADS may be used to construct cryptographic
functions with good nonlinearity, see Chapter 6 of \cite{cusick2004stream}.

A {\em Golomb ruler} of order $k$ is a set of $k$ distinct integers
$d_1 <  d_2< \ldots < d_k$ for which all the differences $d_i - d_j$
($i \neq j$)
are distinct. The {\em length} of a Golomb ruler is $d_k - d_1$.
A $(v,k,0,t)$-ADS is also known as a $(v,k)$-{\em modular Golomb
  ruler} (MGR),
a $k$-element set for which the differences are a packing of the integers
modulo $v$.
MGRs have been studied separately from general ADS; see \cite{bs2021}
and \cite{bs2022}
for recent results.  They have applications to
optical orthogonal codes \cite{yu2013deterministic}.
fingerprint designs \cite{zhao2015design},
and locally repairable codes \cite{choi2025optimal}.

Arasu et al. \cite{adhkm2001} gave the following theorems for constructing
ADS by adding or removing an element from particular difference sets.

\begin{theorem}\label{thm_remove}
  Let $D$ be an $(v,(v+3)/4,(v+3)/16)$-difference set of $G$, and let
  $d$ be an element of $D$.  If $2d$ cannot be written as the sum of
  two distinct elements of $D$, then $D \backslash \{d\}$ is an
  $(v,(v-1)/4,(v-13)/16,(v-1)/2)$-almost difference set of $G$.
\end{theorem}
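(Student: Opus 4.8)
The plan is to expand $D'(D')^{-1}$ in the group ring and compare it with the identity for $D$. Writing $D' = D - d$, one has
\begin{equation*}
D'(D')^{-1} = DD^{-1} - Dd^{-1} - dD^{-1} + 1 = (n + \lambda G) - Dd^{-1} - dD^{-1} + 1,
\end{equation*}
so everything reduces to the two boundary terms $Dd^{-1}$ and $dD^{-1}$, i.e.\ to how deleting the point $d$ perturbs each difference count. Since the parameters require $(v+3)/4$ and $(v+3)/16$ to be integers, we have $v \equiv 13 \pmod{16}$; in particular $v$ is odd, so $G$ (taken abelian, as the phrase ``sum of two elements'' indicates) has no element of order $2$. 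I write $G$ additively and set $\Delta_S(g) = \#\{(x,y)\in S\times S : x-y = g\}$, so that the difference-set hypothesis is simply $\Delta_D(g) = \lambda = (v+3)/16$ for all $g \neq 0$.

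First I would compute $\Delta_{D'}$ from $\Delta_D$ by inclusion--exclusion on the deleted point. For $g \neq 0$, the only ordered pairs of $D \times D$ with difference $g$ that fail to survive into $D' \times D'$ are $(d, d-g)$ and $(d+g, d)$, which are distinct from each other and from the diagonal; each is present exactly when its non-$d$ coordinate lies in $D$. Hence
\begin{equation*}
\Delta_{D'}(g) = \lambda - [\,d-g\in D\,] - [\,d+g\in D\,],
\end{equation*}
where $[P]\in\{0,1\}$ denotes the indicator of $P$. Thus $\Delta_{D'}(g) \in \{\lambda,\lambda-1,\lambda-2\}$, and since $\lambda - 1 = (v-13)/16 = \lambda'$ and $\lambda = \lambda' + 1$, the two admissible almost-difference-set multiplicities are already visible; the task is to exclude $\lambda - 2$.

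The crux --- and the only subtle point --- is that $\Delta_{D'}(g) = \lambda - 2$ occurs for some $g \neq 0$ exactly when both indicators are $1$, i.e.\ when $d - g = d_i$ and $d + g = d_j$ for some $d_i, d_j \in D$. These satisfy $d_i + d_j = 2d$, and because $g \neq 0$ and $G$ has no element of order $2$ we get $d_i \neq d_j$. So the bad value appears if and only if $2d$ is a sum of two distinct elements of $D$, which the hypothesis forbids. The parity observation is precisely what makes the hypothesis strong enough: without it one could have $d_i = d_j$, and the forbidden configuration would not be ruled out. Therefore $\Delta_{D'}(g) \in \{\lambda', \lambda'+1\}$ for every nonzero $g$, and $|D'| = (v+3)/4 - 1 = (v-1)/4$, so $D'$ is an almost difference set with the stated $k$ and $\lambda'$.

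Finally I would count $t$, the number of nonzero $g$ with the smaller multiplicity $\lambda'$. By the formula this is the number of $g \neq 0$ with $[\,d-g\in D\,] + [\,d+g\in D\,] = 1$; since that indicator sum now takes only the values $0$ and $1$, it equals $\sum_{g\neq 0}\big([\,d-g\in D\,] + [\,d+g\in D\,]\big)$. As $g$ ranges over the nonzero elements, $d-g$ and $d+g$ each range over $G \setminus \{d\}$, so each sum is $|D \setminus \{d\}| = k - 1$. Hence $t = 2(k-1) = 2\big((v+3)/4 - 1\big) = (v-1)/2$, giving the claimed $(v,(v-1)/4,(v-13)/16,(v-1)/2)$ parameters; the total-difference identity $\sum_{g\neq 0}\Delta_{D'}(g) = |D'|(|D'|-1)$ furnishes an independent consistency check.
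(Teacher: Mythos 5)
Your proof is correct, and it follows essentially the same route the paper relies on: the paper states this theorem without proof (citing Arasu et al.) and instead works with the sumset reformulation of Theorem~\ref{thm:sumset}, whose key point is exactly your ``crux'' step --- that for $v$ odd the double-decrement $\Delta_{D'}(g)=\lambda-2$ forces $2d=d_i+d_j$ with $d_i\neq d_j$, so the hypothesis $2d\notin S(D)$ rules it out. Your count $t=2(k-1)=(v-1)/2$ likewise matches the parameter in Theorem~\ref{thm:gen_remove}, so nothing further is needed.
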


\begin{theorem}\label{thm_add}
  Let $D$ be an $(v,(v-1)/4,(v-5)/16)$-difference set of $G$, and let
  $d$ be an element of $G \backslash D$.  If $2d$ cannot be written as the sum of
  two distinct elements of $D$, then $D \cup \{d\}$ is an
  $(v,(v+3)/4,(v-5)/16,(v-1)/2)$-almost difference set of $G$.
\end{theorem}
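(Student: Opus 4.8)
The plan is to mirror the group-ring computation behind Theorem~\ref{thm_remove}, working in $\Znum[G]$. Write $D' = D \cup \{d\}$; since $d \notin D$ we have $|D'| = k+1 = (v+3)/4$, the claimed block size. The strategy is to expand $D'(D')^{-1}$, substitute the difference-set identity \eqref{eq:ds} for $DD^{-1}$, and read off the multiplicity of each nonzero $g \in G$ as a difference.

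First I would expand
\begin{equation*}
D'(D')^{-1} = (D + d)(D^{-1} + d^{-1}) = DD^{-1} + Dd^{-1} + dD^{-1} + e,
\end{equation*}
and use \eqref{eq:ds} to replace $DD^{-1}$ by $n + \lambda G$, where $n = k - \lambda = (3v+1)/16$. The two cross terms are the indicator elements of the translate $D - d$ and its reflection $d - D$: the coefficient of a nonzero $g$ in $Dd^{-1}$ is $1$ exactly when $g + d \in D$, and in $dD^{-1}$ exactly when $d - g \in D$. Hence each nonzero $g$ occurs as a difference of $D'$ precisely
\begin{equation*}
\lambda + [\,g + d \in D\,] + [\,d - g \in D\,]
\end{equation*}
times, where $[\cdot]$ is the Iverson bracket. (The identity coefficient works out to $n + \lambda + 1 = k+1 = |D'|$, as it must.)

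The crux is to show this multiplicity never reaches $\lambda + 2$, i.e.\ that the two brackets are not simultaneously $1$. Suppose they were, and set $a = g + d$ and $b = d - g$, so $a, b \in D$ and $a + b = 2d$. The parameters force $v \equiv 5 \pmod{16}$, so $v$ is odd, $G$ has no involution, and $2g \neq 0$; thus $a - b = 2g \neq 0$ and $a, b$ are \emph{distinct}. This exhibits $2d$ as a sum of two distinct elements of $D$, contradicting the hypothesis. Therefore every nonzero $g$ occurs $\lambda$ or $\lambda+1$ times, so $D'$ is an almost difference set with the stated $\lambda$.

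It remains to evaluate $t$. The map $g \mapsto g + d$ is a bijection of $G$, so exactly $k$ values of $g$ satisfy $g + d \in D$, and none of them is $g = 0$ because $d \notin D$; likewise exactly $k$ nonzero $g$ satisfy $d - g \in D$. By the disjointness just established, these two sets of size $k$ are disjoint, so exactly $2k = (v-1)/2$ nonzero elements have multiplicity $\lambda + 1$, and the remaining $(v-1) - 2k = (v-1)/2$ have multiplicity $\lambda$. Hence $t = (v-1)/2$, matching the claimed $(v,(v+3)/4,(v-5)/16,(v-1)/2)$-ADS. I expect the only real subtlety to be the distinctness argument, where oddness of $v$ is exactly what rules out the degenerate involution case; the rest is bookkeeping parallel to the proof of Theorem~\ref{thm_remove}.
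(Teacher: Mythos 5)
Your proof is correct, and it is essentially the argument the paper relies on: the paper cites this result from Arasu et al.\ and subsumes it under Theorems~\ref{thm:gen_add} and~\ref{thm:sumset}, whose content is exactly your group-ring expansion, the disjointness of $D-d$ and $d-D$ forced by $2d\notin S(D)$, and the observation that oddness of $v$ (here $v\equiv 5\pmod{16}$) rules out the degenerate case $g+d=d-g$. The bookkeeping giving $t=v-1-2k=(v-1)/2$ also matches.
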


Daza et al. \cite{urbano2021almost} generalized these theorems:

\begin{theorem}\label{thm:gen_add}
  Let $D$ be a $(v,k,\lambda)$-difference set in $G$.  If
  \begin{enumerate}
  \item $g \in G \backslash D$,
  \item $(g-D) \cap (D-g) = \emptyset$,
  \end{enumerate}
  then $D\cup \{g\}$ is a $(v,k+1,\lambda,v-1-2k)$-ADS.  
\end{theorem}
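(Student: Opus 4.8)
The plan is to prove the statement by directly counting difference representations, which reduces everything to the difference-set identity for $D$ plus two easily understood correction terms. Writing $G$ additively, for a subset $S \subseteq G$ and a nonzero $h \in G$ let $\delta_S(h)$ denote the number of ordered pairs $(x,y) \in S \times S$ with $x - y = h$; the defining identity \eqref{eq:ds} says precisely that $\delta_D(h) = \lambda$ for every nonzero $h$. I would compute $\delta_{D'}(h)$ for $D' = D \cup \{g\}$ and show that it takes only the values $\lambda$ and $\lambda+1$, the larger value occurring on exactly $2k$ nonzero elements.

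First, since $g \notin D$ by hypothesis (1), we have $|D'| = k+1$. I would split each representation $h = x - y$ with $x,y \in D'$ according to which coordinate equals $g$: the pairs with $x,y \in D$ contribute $\delta_D(h) = \lambda$; the degenerate pair $x = y = g$ gives only $h = 0$ and is discarded; the pairs with $x = g$, $y \in D$ contribute $1$ exactly when $h \in g - D$; and the pairs with $x \in D$, $y = g$ contribute $1$ exactly when $h \in D - g$. This yields, for every nonzero $h$,
\begin{equation*}
\delta_{D'}(h) = \lambda + \mathbf{1}[h \in g - D] + \mathbf{1}[h \in D - g].
\end{equation*}

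It then remains to control the two indicator terms. Because $d \mapsto g - d$ and $d \mapsto d - g$ are injective, each of $g - D$ and $D - g$ has exactly $k$ elements, and neither contains $0$ since $g \notin D$. Hypothesis (2), $(g - D) \cap (D - g) = \emptyset$, ensures no nonzero $h$ lies in both, so $\delta_{D'}(h) = \lambda + 1$ holds on the $2k$ distinct elements of $(g - D) \cup (D - g)$ and $\delta_{D'}(h) = \lambda$ on the remaining $(v-1) - 2k$ nonzero elements. Reading off the parameters, $D'$ is a $(v, k+1, \lambda, v-1-2k)$-ADS, as claimed.

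I expect no serious obstacle here; the computation is routine once the setup is fixed. The only point needing genuine care is the count in the last paragraph — that the differences occurring $\lambda+1$ times number exactly $2k$ — which depends on each auxiliary set having full size $k$ and on their disjointness; without hypothesis (2) some element could fall in both sets and occur $\lambda+2$ times, breaking the ADS property. Equivalently, one may run the whole argument in the group ring via $D'(D')^{-1} = (n + \lambda G) + 1 + gD^{-1} + Dg^{-1}$, which handles nonabelian $G$ as well and exhibits the present result as a generalization of Theorem~\ref{thm_add}.
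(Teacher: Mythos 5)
Your proof is correct: the decomposition $\delta_{D'}(h)=\lambda+\mathbf{1}[h\in g-D]+\mathbf{1}[h\in D-g]$, together with the observations that $g-D$ and $D-g$ each have $k$ elements, omit $0$, and are disjoint by hypothesis (2), yields exactly the claimed $(v,k+1,\lambda,v-1-2k)$ parameters. The paper states this theorem as a quoted result of Daza et al.\ without giving a proof, but your counting argument is precisely the one the paper implicitly invokes in its later discussion (``adding or removing zero changes the number of times a difference appears by 0 or 1''), and your closing group-ring remark correctly covers the nonabelian case as well.
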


\begin{theorem}\label{thm:gen_remove}
  Let $D$ be a $(v,k,\lambda)$-difference set in $G$.  If
  \begin{enumerate}
  \item $d \in D$,
  \item $(d-D) \cap (D-d) = \{0\}$,
  \end{enumerate}
  then $D\backslash \{d\}$ is
  a $(v,k-1,\lambda-1,2(k-1))$-ADS.
\end{theorem}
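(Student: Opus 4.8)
The plan is to compute the group-ring product for $D \setminus \{d\}$ directly from the difference-set identity~\eqref{eq:ds}, mirroring the argument behind Theorem~\ref{thm:gen_add}. Removing the single element $d$ corresponds to replacing $D$ by $D - d$ in $\Znum[G]$, whose group-ring inverse is $D^{-1} - d^{-1}$, so I would start by expanding
\[
(D - d)(D^{-1} - d^{-1}) = DD^{-1} - Dd^{-1} - dD^{-1} + dd^{-1}.
\]
Here $DD^{-1} = n + \lambda G$ with $n = k - \lambda$, the term $dd^{-1}$ is the identity element $e$, and the two mixed terms are the ``correction'' sets: $Dd^{-1} = \{x - d : x \in D\} = D - d$ and $dD^{-1} = \{d - x : x \in D\} = d - D$, each a $k$-element set containing $e$ exactly once (from $x = d$).

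Substituting gives $(D-d)(D^{-1}-d^{-1}) = (n + \lambda G) - (D-d) - (d-D) + e$. The next step is to read off coefficients. Checking the identity coefficient as a sanity test yields $k - 1 - 1 + 1 = k-1 = |D\setminus\{d\}|$, as required. For a nonzero $g \in G$, only $\lambda G$, $D - d$, and $d - D$ contribute, so the number of times $g$ occurs as a difference of $D \setminus \{d\}$ equals $\lambda$ minus the indicator of $g \in D - d$ minus the indicator of $g \in d - D$.

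The crux is then hypothesis~(2). Since $(d - D) \cap (D - d) = \{0\}$, no nonzero $g$ can lie in both correction sets, so each nonzero coefficient is either $\lambda$ (if $g$ lies in neither set) or $\lambda - 1$ (if $g$ lies in exactly one); in particular the multiplicity $\lambda - 2$ is ruled out. It remains to count the elements with multiplicity $\lambda - 1$: these are exactly the nonzero elements of $(D - d) \cup (d - D)$. Each of $D - d$ and $d - D$ contributes $k - 1$ nonzero elements, and hypothesis~(2) guarantees that these two collections of nonzero elements are disjoint, giving $2(k-1)$ elements of multiplicity $\lambda - 1$ and the remaining $v - 1 - 2(k-1)$ of multiplicity $\lambda$. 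This is precisely a $(v, k-1, \lambda - 1, 2(k-1))$-ADS. The computation is routine; the only points demanding care are the bookkeeping of the identity term and recognizing that $(D-d)\cap(d-D) = \{0\}$ is exactly the disjointness statement needed both to exclude multiplicity $\lambda - 2$ and to pin down the count $t = 2(k-1)$.
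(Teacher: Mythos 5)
Your proof is correct. Note that the paper does not actually prove this theorem: it is quoted from Daza et al.\ \cite{urbano2021almost} without proof, so there is no in-paper argument to compare against; your expansion of $(D-d)(D^{-1}-d^{-1})$ using \eqref{eq:ds} is the standard computation and is precisely what underlies the paper's sumset restatement in Theorem~\ref{thm:sumset}. The one cosmetic point worth fixing is the overloading of the notation $D-d$, which you use both for the group-ring element of $D\setminus\{d\}$ and for the translate $\{x-d : x\in D\}$; the bookkeeping is nonetheless right, including the check that hypothesis~(2) simultaneously rules out multiplicity $\lambda-2$ and forces $t=2(k-1)$.
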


Using this they show that an element may be removed from
a $(v,k,1)$-planar difference set (referred to there as a Singer type
Golomb ruler) to get
$(v,k-\ell,0,\ell(2k-l-1)$-ADS  by removing any $\ell$ elements from $D$, for
any $\ell=1,2,\ldots,k$.  By adding an element they get a
$(v,k+1,1,v-1-2k)$-ADS 

Note that planar difference sets are only known to exist for $k-1$ a
prime power.  It has been shown by Peluse \cite{peluse2021asymptotic}
that asymptotically almost all planar difference sets are of this
form, and no counterexamples exist with $k < 2\cdot 10^{10}$
\cite{gordon2022difference}.

%% Daza Urbano et al. \cite{urbano2021almost} give another method of
%% making an almost difference set from a difference set, by projecting
%% to a subgroup.

%% \begin{theorem}\label{thm:modw}
%%   Let $D$ be a $(v,k,1)$-difference set with $v \equiv 0 \pmod 3$.
%%   Then reducing the elements of $D$ modulo $v/3$ gives a
%%   $(v/3,k-1,2,2k-4)$-ADS.
%% \end{theorem}

In the next section we will investigate other difference sets where an
ADS may be formed using one of these constructions.
Section~\ref{sec:mgr} looks at the special case of modular Golomb
rulers, and Section~\ref{sec:comp} gives the results of some
computational searches.  In the appendix we show for which primes the octic
residues, either with or without zero added, form an almost difference set.

\section{Adding or removing elements from Difference sets}\label{sec:addremove}

Call the set of all sums of two distinct elements of $D$ the 
{\em sumset} of $D$, and denote it by $S(D)$:
$$
S(D) = \left\{ d_i + d_j: d_i \neq d_j \in D \right\}
$$

The following is a restatement of Theorems~\ref{thm:gen_add} and
\ref{thm:gen_remove}, which will be used to show that other difference
sets form almost difference sets by adding or removing an element.
  
\begin{theorem}\label{thm:sumset}
  %% Let $D$ be a $(v,k,\lambda)$-difference set in $G$ with $v$ odd, and $2d\in G$ be
  %% an element not in $S(D)$.  If $d \in D$, then $D\backslash \{d\}$ is
  %% a $(v,k-1,\lambda-1)$-ADS.  Otherwise, $D\cup \{d\}$ is a
  %% $(v,k+1,\lambda)$-ADS.  
  Let $D$ be a $(v,k,\lambda)$-difference set in $G$ with $v$ odd.  If
  $d \in D$ and $2d\not\in S(D)$, then $D\backslash \{d\}$ is
  a $(v,k-1,\lambda-1)$-ADS.  If $g \not\in D$ and $2g \not\in S(D)$,
  then $D\cup \{d\}$ is a $(v,k+1,\lambda)$-ADS.  
\end{theorem}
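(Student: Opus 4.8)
The plan is to observe that this is, as stated, a restatement of Theorems~\ref{thm:gen_add} and~\ref{thm:gen_remove}, so the only work is to verify that the sumset conditions here are \emph{equivalent} to the set-intersection conditions there. The whole content reduces to translating ``$2x \not\in S(D)$'' into a statement about $(x-D)\cap(D-x)$, and the hypothesis that $v$ is odd is precisely what makes this translation clean.

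First I would handle the removal case. Fix $d \in D$ and claim that $(d-D)\cap(D-d) = \{0\}$ if and only if $2d \not\in S(D)$. The element $0$ always lies in the intersection (from $d-d$), so the issue is whether there is a \emph{nonzero} element. Any $x$ in the intersection has the form $x = d - d_i = d_j - d$ for some $d_i, d_j \in D$, which rearranges to $2d = d_i + d_j$. If $x \neq 0$ then $d_i \neq d$ and $d_j \neq d$; moreover I would use that $v$ is odd to rule out $d_i = d_j$, since $2d = 2d_i$ would force $d = d_i$ (the doubling map on $G$ is injective when $v$ is odd, as $G$ then has no element of order two by Cauchy's theorem). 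Hence $d_i \neq d_j$, so $d_i + d_j \in S(D)$. Conversely, a representation $2d = d_i + d_j$ with $d_i \neq d_j$ yields the common value $d - d_i = d_j - d$, which is nonzero (else $d_i = d = d_j$) and lies in the intersection. Thus the hypothesis of Theorem~\ref{thm:gen_remove} holds exactly when $2d \not\in S(D)$, and that theorem produces the $(v,k-1,\lambda-1)$-ADS.

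The addition case is entirely parallel. Fix $g \not\in D$. Here $0 \not\in g-D$ (that would require $g = d_i \in D$), so the natural condition is that $(g-D)\cap(D-g)$ be \emph{empty}. An element $x = g - d_i = d_j - g$ of the intersection gives $2g = d_i + d_j$, and again oddness of $v$ forbids $d_i = d_j$ (which would force $g = d_i \in D$), so $d_i + d_j \in S(D)$; the converse runs the same way. Hence $(g-D)\cap(D-g) = \emptyset$ is equivalent to $2g \not\in S(D)$, and Theorem~\ref{thm:gen_add} then yields the $(v,k+1,\lambda)$-ADS.

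I expect the one point needing care to be the diagonal case $d_i = d_j$. Without assuming $v$ odd the two formulations genuinely diverge, because $x \mapsto 2x$ can have nontrivial kernel and an element can pair with itself; making the injectivity of doubling explicit is the crux that justifies replacing the intersection conditions by the simpler sumset conditions. Everything else is a direct rearrangement, so no separate verification of the parameters is required beyond what the cited theorems already supply.
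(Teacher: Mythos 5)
Your proposal is correct and follows the same route as the paper, which simply declares Theorem~\ref{thm:sumset} to be a restatement of Theorems~\ref{thm:gen_add} and \ref{thm:gen_remove} and leaves the equivalence of the conditions implicit. Your explicit verification that $2x \not\in S(D)$ matches the intersection conditions --- in particular using the injectivity of doubling when $v$ is odd to handle the diagonal case $d_i = d_j$ --- supplies exactly the detail the paper omits, and is consistent with the paper's subsequent counterexample in $\Znum_2 \times \Znum_8$ showing the oddness hypothesis is needed.
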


For this version, the condition $v$ odd is necessary.  For example,
the sumset of the $(16,6,2)$
difference set in $G = \Znum_2 \times \Znum_8$:
$$
\{ (0, 0), (0, 1), (0, 2), (0, 5), (1, 0), (1, 6) \}
$$
does not contain $(0, 0)$, $(0, 4)$, or $(1, 4)$, but $(1,4)$ is not
$2d$ for any $d \in G$.  Removing $(0,0)$ does not result in an ADS,
since $(0,0) - (1,0) = (1,0) - (0,0)$, and the same holds for removing
$(0,2)$.

All of the theorems in the last section have $v$ odd.  The fact that
an element can be added or removed from a planar difference set
follows from the fact that 
  the size of $S(D)$ is at most $\binom{k}{2}$.  If
  $v-k > |S(D)|$, then some element $d$ has $2d \not\in S(D)$ and $d
  \not\in D$.
  We have $\lambda(v-1) = v-1 = k (k-1)$, so
  $v -k = (k-1)^2$.  For $k \geq 3$
  $(k-1)^2 > \binom{k}{2}$.

The difference sets in
Theorems~\ref{thm_remove} and \ref{thm_add} are of type
$B_0$ and $B$ (respectively) in Hall's notation \cite{hall1998combinatorial}; quartic residues
with and without zero.  Both theorems depend on $0 \not\in S(D)$.

\begin{lemma}
  For $v$ as in Theorems~\ref{thm_remove} and \ref{thm_add}, no
  pair of quartic residues sum to zero.
\end{lemma}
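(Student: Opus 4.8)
The plan is to reduce the statement to the classical cyclotomic fact that $-1$ is a fourth power modulo $v$ precisely when $v \equiv 1 \pmod 8$, and then to verify that every prime $v$ occurring in Theorems~\ref{thm_remove} and \ref{thm_add} satisfies $v \equiv 5 \pmod 8$.

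First I would set up the reduction. For prime $v$ the nonzero quartic residues are the image of $x \mapsto x^4$ on $(\Znum/v\Znum)^*$, a multiplicative subgroup $Q$ of index $4$. If $r_1,r_2 \in Q$ with $r_1+r_2=0$, then $r_2=-r_1$ and hence $-1=r_2 r_1^{-1}\in Q$; conversely, if $-1\in Q$ then $1$ and $-1$ form a pair of quartic residues summing to zero. Thus ``no pair of quartic residues sums to zero'' is equivalent to $-1\notin Q$, i.e.\ to $-1$ not being a fourth power modulo $v$. The adjoined $0$ in the type-$B_0$ case causes no trouble, since a sum of two \emph{distinct} elements can equal $0$ only through two nonzero residues (and $2r=0$ forces $r=0$ as $v$ is odd).

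Next I would record the criterion for $-1\in Q$. Fixing a primitive root $g$, one has $Q=\langle g^4\rangle$ and $-1=g^{(v-1)/2}$, so $-1\in Q$ if and only if $4\mid (v-1)/2$, that is, if and only if $v\equiv 1\pmod 8$. Finally I would pin down the form of $v$: the difference sets in Theorems~\ref{thm_remove} and \ref{thm_add} are the quartic residues with and without zero, which are difference sets exactly for primes $v=4t^2+9$ and $v=4t^2+1$ respectively, with $t$ odd. Since $t$ odd forces $t^2\equiv 1\pmod 8$, both forms give $v\equiv 5\pmod 8$. Hence $v\not\equiv 1\pmod 8$, so $-1$ is not a fourth power and no pair of quartic residues sums to zero.

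The arithmetic is routine; the only point that requires care is the reduction step, where one must use that $Q$ is genuinely a multiplicative subgroup (so that $r_2 r_1^{-1}\in Q$) and must exclude spurious zero-sums arising from the adjoined $0$ in the $B_0$ case. Once that equivalence is clean, the congruence $v\equiv 5\pmod 8$ follows immediately from $t$ being odd, and the lemma is established.
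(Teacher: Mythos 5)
Your proof is correct and follows essentially the same route as the paper: both arguments reduce the claim to showing $-1$ is not a quartic residue, which follows from $v\equiv 5\pmod 8$ via the (generalized) Euler criterion. The only cosmetic difference is that you obtain the congruence $v\equiv 5\pmod 8$ from the classification $v=4t^2+9$ resp.\ $v=4t^2+1$ with $t$ odd, whereas it also drops out directly from the integrality of $\lambda=(v+3)/16$ resp.\ $(v-5)/16$ in the stated parameters.
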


\begin{proof}
  In both Theorems, $v \equiv 5 \pmod 8$, so $-1$ is not a
  quartic residue (this follows immediately from a generalization
  of Euler's criterion; see, for example, Theorem 2.8 of \cite{rose1995course}).
  Thus the negative of a  quartic residue is a nonresidue, and so not in $D$.
\end{proof}

Thus $d=0$ may be used for either theorem.  This was implicit in
\cite{adhkm2001}, since Theorem 2 of that paper points out that
quartic residues with or without $\{0\}$ in the two cases are almost
difference sets.

Similar results hold for other residue difference sets.  For quadratic
residues (Paley difference sets), the resulting ADS is actually a
difference set:

\begin{theorem}
  Let $v=4n-1$ be prime, and $D$ the $(4n-1,2n-1,n-1)$ Paley difference set in 
  ${\rm GF}(v)$.  Then $D \cup \{0\}$ is a
  $(4n-1,2n,n-1,v-1-2k)$-difference set.
\end{theorem}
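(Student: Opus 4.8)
The plan is to obtain this as a special case of the addition construction and then observe that the resulting ADS is in fact a genuine difference set because its fourth parameter vanishes. Concretely, I would apply Theorem~\ref{thm:gen_add} (or its restatement Theorem~\ref{thm:sumset}, valid here since $v=4n-1$ is odd) with $g=0$, taking $D$ to be the Paley difference set of quadratic residues in $\mathrm{GF}(v)$.

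The one fact that makes everything work is that $v=4n-1\equiv 3\pmod 4$, so $-1$ is a quadratic nonresidue. Hence $-q$ is a nonresidue whenever $q$ is a residue, which gives $(-D)\cap D=\emptyset$; this is precisely the hypothesis $(0-D)\cap(D-0)=\emptyset$ of Theorem~\ref{thm:gen_add} (equivalently $2\cdot 0=0\notin S(D)$ in Theorem~\ref{thm:sumset}). Since also $0\notin D$, the construction applies and $D\cup\{0\}$ is a $(v,k+1,\lambda,v-1-2k)$-ADS with $k=2n-1$ and $\lambda=n-1$. Substituting $k=2n-1$ gives a fourth parameter $v-1-2k=(4n-1)-1-2(2n-1)=0$. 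An ADS with $t=0$ has no nonzero element attaining the smaller multiplicity $\lambda$, so every nonzero element occurs exactly $\lambda+1=n$ times; this is exactly a $(4n-1,2n,n)$-difference set, proving the claim.

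If a self-contained check is wanted, the same count is transparent by hand. Within $D$ the residues already realize each nonzero element $n-1$ times. Adjoining $0$ adds the differences $q-0=q$, contributing each residue once, together with $0-q=-q$; because $-1$ is a nonresidue these latter differences cover every nonresidue exactly once. Thus each residue and each nonresidue gains exactly one occurrence, raising every nonzero element uniformly from $n-1$ to $n$. There is no real obstacle here: the only thing to verify is the nonresidue status of $-1$, which is automatic from the Paley congruence $v\equiv 3\pmod 4$, and the substance of the statement is simply the observation that the general ADS construction returns $t=0$ in this case.
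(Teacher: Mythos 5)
Your argument is correct and matches the paper's own proof in substance: the key fact in both is that $v\equiv 3\pmod 4$ makes $-1$ a nonresidue, so the new differences $d-0=d$ and $0-d=-d$ for $d\in D$ cover the residues and nonresidues once each, raising every multiplicity uniformly from $n-1$ to $n$. Your additional framing via Theorem~\ref{thm:gen_add} with $g=0$ and the observation that $t=v-1-2k=0$ forces a genuine difference set is a harmless (and slightly tidier) repackaging of the same count.
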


\begin{proof}
  $D$ consists of all the squares in ${\rm GF}(v)^*$.  
  Each of the
  nonzero elements of ${\rm GF}(v)$ can be expressed in exactly
  $\lambda$ ways as differences of $D$.  Since
 $-1$ is a nonsquare mod $v$, so
  the new differences $\{d\}$
  and $\{-d\}$ for $d \in D$ are all distinct, the former being
  squares and the latter nonsquares.

  In fact, this is equivalent to the complement of the original difference set.
\end{proof}

Marshall Hall \cite{hall1998combinatorial} defines a
type $O$ difference set to be the octic residues of a prime
$p = 8 a^2+1 = 64 b^2+9$, with $a, b$ odd, and type 
$O_0$ difference set to be $\{0\}$ and the octic residues of a prime
$p = 8 a^2+49 = 64 b^2+441$, with $a$ odd, $b$ even.

\begin{theorem}\label{thm_octic}
  Let $D$ be a difference set of type $O$ (respectively type $O_0$).
  Then adding (respectively removing) $\{0\}$ from $D$ gives an almost
  difference set.
\end{theorem}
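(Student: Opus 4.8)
The plan is to apply Theorem~\ref{thm:sumset} with the distinguished element $0$, exactly as in the quartic-residue cases treated just above. Since a prime $p > 2$ is odd, the hypothesis that $v$ is odd is automatic, so the only condition that needs checking in either case is the sumset condition $2 \cdot 0 = 0 \notin S(D)$.

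First I would observe that in both the type $O$ and type $O_0$ settings, $0 \in S(D)$ if and only if two distinct elements of $D$ sum to zero. The octic residues are all nonzero, and a pair $\{0, x\}$ sums to zero only when $x = 0$, which is excluded because the two summands must be distinct. Hence, whether or not $0$ itself lies in $D$, we have $0 \in S(D)$ precisely when some octic residue $d$ has $-d$ also an octic residue; equivalently, precisely when $-1$ is an octic residue modulo $p$.

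Next I would show that $-1$ is not an octic residue for either prime family, mirroring the lemma preceding this theorem. By the generalized Euler criterion, $-1$ is an eighth-power residue modulo $p$ if and only if $(-1)^{(p-1)/8} \equiv 1 \pmod p$, that is, if and only if $(p-1)/8$ is even. For a type $O$ prime $p = 8a^2 + 1$ with $a$ odd, we have $(p-1)/8 = a^2$, which is odd; for a type $O_0$ prime $p = 8a^2 + 49$ with $a$ odd, we have $(p-1)/8 = a^2 + 6$, again odd. In both cases $(p-1)/8$ is odd, so $-1$ is not an octic residue, and therefore $0 \notin S(D)$.

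Finally, I would conclude by invoking Theorem~\ref{thm:sumset}. In the type $O$ case $0 \notin D$, since $D$ consists of nonzero octic residues, so the addition clause gives that $D \cup \{0\}$ is an ADS; in the type $O_0$ case $0 \in D$ by definition, so the removal clause gives that $D \setminus \{0\}$ is an ADS. The main point—rather than a genuine obstacle—is the bookkeeping that adjoining or deleting $0$ creates no pair summing to zero beyond the ``$-1$ is a residue'' condition; once that reduction is in hand, the residue computation is immediate from the defining congruences, and Theorem~\ref{thm:sumset} supplies the ADS parameters automatically, so no explicit parameter calculation is needed.
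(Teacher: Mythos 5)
Your proposal is correct and follows essentially the same route as the paper: both reduce the claim to the sumset condition $0\notin S(D)$ of Theorem~\ref{thm:sumset}, observe that this holds precisely when $-1$ is not an octic residue, and verify the latter via the generalized Euler criterion (the paper notes $p\equiv 9\pmod{16}$ in both families, which is exactly your computation that $(p-1)/8$ is odd). Your explicit handling of the pair $\{0,x\}$ in the type $O_0$ case is a minor elaboration the paper leaves implicit, but the argument is the same.
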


The proof is the same as before; in both cases $p \equiv 9 \pmod
{16}$, so $-1$ is an octic
nonresidue (again from the generalized Euler criterion),
and no two distinct elements of $D$ will sum to zero, so
adding or removing zero changes the number of times a difference
appears by 0 or 1.

The type $O$ almost difference set was pointed out in Theorem 2 of 
\cite{adhkm2001}.  
The type $O_0$ ADS was shown in \cite{nowak2014survey}, which has not
been published.
The smallest type $O$ ADS given by Theorem~\ref{thm_octic} is
$(26041,3255,406,6510)$.
The smallest type $O_0$ almost difference set is $(73,10,1,54)$, followed by
$(104411704393,13051463050,1631432881,78308778294)$.

In \cite{ding1997cryptographic} it is shown that 
$(v,k,\lambda,t)$-almost
difference sets of type $O$ exist only for $v=41$ or primes $v=8f+1$, where $f \equiv 5
\pmod 8$, 2 is a quartic residue modulo $v$, and $v$ has the
representations
$$
v = 19^2 + 4 y^2 = 1 + 2 b^2
$$
or
$$
v = 13^2 + 4 y^2 = 1 + 2 b^2.
$$
However, this paper was using Ding's original definition of ADS, where
$t=(v-1)/2$.
This theorem has appeared numerous times in the literature
(\cite{cusick2004stream},
\cite{ding},
\cite{ding1999several},
\cite{qi2016nonexistence}), not always making it clear which
definition of ADS it applies to, and the corresponding theorem for
general ADS has not appeared.
Theorem~\ref{thm_octic} shows that, for the now-standard definition, there are other octic ADS.
In the appendix we determine all such ADS of type $O$ and $O_0$.

Theorems~\ref{thm:gen_add} and \ref{thm:gen_remove} are applied in
\cite{urbano2021almost} 
to planar difference sets.  They (and Theorem~\ref{thm:sumset} for $v$ odd)
may also be applied to 
$(v,k,2)$-difference sets, known as {\em biplanes}.  There are still
$\binom{k}{2}$  sums of distinct elements of $D$, and
$$
\lambda (v-1) = 2(v-1) = k(k-1) = 2 \binom{k}{2}
$$
so $v = \binom{k}{2}+1$, so there is at least one element mod $v$ not
in $S(D)$.

Unfortunately, 
very few biplanes are known.  Aside from the trivial
$(2,2,2)$ and $(4,3,2)$, and $(7,4,2)$ (complement of the $(7,3,1)$ planar difference
set), the only known examples are $(11,5,2)$, $(16,6,2)$ and $(37,9,2)$.
In \cite{gordon2022difference}, it is shown that no biplanes exist in any
abelian group of order up to $10^{10}$, with six possible exceptions.

The $(37,9,2)$ biplane is an interesting case.  It is of 
$B$ type, the quartic residues mod 37, so by
Theorem~\ref{thm_add}, adding zero gives an ADS.  However, 
its sumset is smaller than $v-1$.  In fact, the elements missing from
the sumset are $0$ and $2d$ for each quartic residue $d$, so 
removing {\em any} element from the difference set gives an ADS.

The author maintains a database of abelian difference sets
\cite{ljdsr}.  An exhaustive search of the database, trying to add or
remove all possible elements from each one, only came up with the
new cases shown in
Table~\ref{tab:sporadic} 
(all of them adding an
element to a difference set).
All of these are in 2-groups, and all of the families given above are
either planar difference sets or residue difference sets (with or
without 0).  It has been conjectured that no other residue difference
sets (see, for example, \cite{byard2011lam}).
This leads the author to believe:

\begin{conjecture}
  For $G$ cyclic or $v$ odd, no other difference sets form an ADS by
  adding or removing an element.
\end{conjecture}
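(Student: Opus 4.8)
The plan is first to collapse the two conditions of Theorem~\ref{thm:sumset} into a single statement about the sumset. For $v$ odd the doubling map $x\mapsto 2x$ is a bijection of $G$, so every $y\in G$ equals $2x$ for a unique $x$. Hence a $(v,k,\lambda)$-difference set $D$ can be turned into an ADS by adding or removing one element if and only if $S(D)\neq G$: given $y\notin S(D)$ and its preimage $x$, we may remove $x$ when $x\in D$ and add $x$ when $x\notin D$, whereas if $S(D)=G$ then $2x\in S(D)$ for every $x$ and neither move is available. For $v$ odd, then, the conjecture is equivalent to the assertion that $S(D)=G$ for every difference set outside the listed families.

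Next I would account for the known exceptions structurally. Since $|S(D)|\le\binom{k}{2}=\tfrac12\lambda(v-1)<v$ whenever $\lambda\le 2$, planar and biplane difference sets automatically have $S(D)\neq G$; these are precisely the $\lambda\le 2$ families named in the conjecture. The residue families obey a different mechanism: for $v$ odd, $0\in S(D)$ exactly when some pair of distinct elements sums to zero, i.e.\ when $D\cap(-D)\not\subseteq\{0\}$, so $0\notin S(D)$ if and only if $D$ is \emph{skew}, $D\cap(-D)\subseteq\{0\}$. This is the signature of the quadratic, quartic and octic residue sets with $-1$ a nonresidue (and of skew Hadamard sets generally): adding $0$ to a skew Hadamard set restores a difference set as in the Paley case above, while for the quartic and octic residues it produces the proper ADS of Theorems~\ref{thm_remove}, \ref{thm_add} and~\ref{thm_octic}. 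The upshot is that every non-skew difference set already has $0\in S(D)$, so the whole content of the conjecture is confined to the range $\lambda\ge 3$ with $D$ not skew, where one must prove the positive statement $S(D)=G$.

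The crux is to bound the sum-autocorrelation from below. Writing $R(s)$ for the coefficient of $s$ in the group-ring element $DD=D^2$, an element $s$ fails to lie in $S(D)$ only when $R(s)$ is at most the diagonal term $[\,s/2\in D\,]$, so it suffices to prove $R(s)\ge 2$ for every $s$. By Fourier inversion $R(s)=\tfrac1v\sum_{\chi}\chi(D)^2\,\overline{\chi(s)}$, with principal contribution $k^2/v\approx\lambda$, which for $\lambda\ge 3$ comfortably exceeds $2$ on average. The obstruction is that the difference-set equation $DD^{-1}=n+\lambda G$ controls only the modulus $|\chi(D)|^2=n$, not the phase of $\chi(D)^2$; the bare triangle bound $R(s)\ge (k^2-(v-1)n)/v$ is negative in this range and certifies nothing. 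In short, the sum-autocorrelation of a difference set is not governed by its defining difference-autocorrelation, and obtaining a uniform lower bound on it is the main obstacle.

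To break the impasse I would pursue two complementary routes. The structural route assumes the widely believed classification that every abelian difference set lies in a known parameter family, reducing the conjecture to verifying $S(D)=G$ family-by-family for the remaining series with $\lambda\ge 3$; this turns the problem into a finite list of explicit sumset identities but inherits the open status of the classification. The analytic route seeks genuine cancellation in $\sum_{\chi}\chi(D)^2\overline{\chi(s)}$ beyond the modulus bound, for example by pairing each $\chi$ with $\bar\chi$ to replace $\chi(D)^2$ by $2\operatorname{Re}\chi(D)^2=2(2a_\chi^2-n)$, where $a_\chi=\operatorname{Re}\chi(D)$, and arguing that these real parts cannot all conspire against a single $s$ unless $D$ is skew. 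I expect the analytic route to be the hard part, since making the ``cannot all conspire'' step rigorous amounts to a new uniform estimate on difference sets. Finally, the cyclic case of even order---the remaining clause of the conjecture not reached by the doubling argument---must be handled directly through the conditions $(g-D)\cap(D-g)=\emptyset$ and $(d-D)\cap(D-d)=\{0\}$ of Theorems~\ref{thm:gen_add} and~\ref{thm:gen_remove}, since there $x\mapsto 2x$ is no longer a bijection and Theorem~\ref{thm:sumset} does not apply.
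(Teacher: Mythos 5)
There is a fundamental mismatch here: the statement you are addressing is a \emph{conjecture}, and the paper offers no proof of it. The author's stated support is an exhaustive search over a database of known abelian difference sets (which turned up only the sporadic $2$-group examples of Table~1, excluded by the hypothesis ``$G$ cyclic or $v$ odd'') together with the heuristic that for large $\lambda$ a sumset is unlikely to miss an element of $G$ without a structural reason. Your proposal, to its credit, is honest about this: it is a research program, not a proof. The reductions in your first two paragraphs are sound and consistent with the paper's own discussion --- for $v$ odd the doubling map is a bijection, so adding or removing an element is possible exactly when $S(D)\neq G$; the bound $|S(D)|\le\binom{k}{2}$ handles $\lambda\le 2$ (this is precisely the paper's argument for planar difference sets and biplanes); and $0\notin S(D)$ is equivalent to skewness, which is the mechanism behind the quadratic, quartic and octic residue cases. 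Your Fourier computation of $R(s)$ and the observation that the difference-set equation controls only $|\chi(D)|^2=n$ and not the phase of $\chi(D)^2$ is a correct and useful diagnosis of why the obvious analytic attack fails.

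The genuine gap is that the central claim --- $S(D)=G$ for every $(v,k,\lambda)$-difference set with $v$ odd outside the listed families --- is exactly what remains open, and neither of your two proposed routes closes it. The structural route assumes a classification of abelian difference sets that is itself a long-standing open problem (and at least as strong as what is being proved), and the analytic route's key step, that the real parts $\operatorname{Re}\chi(D)^2$ ``cannot all conspire against a single $s$ unless $D$ is skew,'' is asserted without any mechanism for proving it; as you note yourself, this would be a new uniform estimate on difference sets. Two smaller points: the reduction to ``non-skew $D$ with $\lambda\ge 3$'' is not quite complete, since a hypothetical new \emph{skew} difference set outside the residue families would also violate the conjecture (skewness gives $0\notin S(D)$ and hence an ADS by adding $0$), so skew sets cannot simply be set aside; and the cyclic-even-order clause, which you correctly flag as outside the reach of the doubling argument, is left entirely untouched. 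In short, the proposal is a reasonable account of the evidence for the conjecture and of the obstacles to proving it, but it does not establish the statement, and no proof should be claimed.
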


There is no firm basis for extending this conjecture to all groups.
On
the one hand, as $\lambda$ gets bigger, the probability of the sumset
not hitting all the elements of $G$ without a causal reason becomes
very small.
On the other, there are many undiscovered difference sets
that might work.
Omar AbuGhneim
in \cite{AbuGhneim2016} finds all (64,28,12) difference sets in {\em all}
groups of order 64, including 6656 in abelian groups, leading to most
of the examples in Table~\ref{tab:sporadic}.
No such computation has been done for difference sets in
larger 2-groups.

\begin{table}
\begin{center}
\begin{tabular}{|c|c|c|c|}
\hline
$(v,k,\lambda)$ & $G$ & $D$ & $d$  \\ \hline
$(16,6,2)$  & $\Znum_4 \times \Znum_4$ & (0, 0), (1, 0), (2, 0), (0, 1), (3, 2), (0, 3) & (1,1) \\ \hline
$(64,28,12)$  & $\Znum_8 \times \Znum_8$ & 
(0, 0), (1, 0), (0, 1), (2, 0), (0, 2), (4, 0),  & (3,1) \\
&& (0, 4), (1, 1), (3, 0), (1, 2), (1, 4), (0, 3), & \\
&& (4, 1), (4, 4), (3, 4), (1, 6), (2, 3), (2, 5), & \\
&& (4, 3), (6, 4), (4, 6), (3, 3), (5, 5), (7, 2), & \\
&& (6, 3), (6, 5), (7, 6), (7, 7) & \\ \hline
$(64,28,12)$  & $\Znum_4 \times \Znum_4 \times \Znum_4$ &
(0, 0, 0), (1, 0, 0), (0, 1, 0), (0, 0, 1), & (0,1,1)  \\
&& (2, 0, 0), (0, 2, 0), (1, 1, 0), (1, 0, 1), & \\
&& (1, 2, 0), (1, 0, 2), (2, 0, 1), (0, 0, 3), & \\
&& (2, 2, 0), (1, 1, 1), (3, 1, 0), (1, 2, 1), & \\
&& (1, 2, 2), (2, 3, 0), (2, 1, 2), (0, 3, 2), & \\
&& (2, 0, 3), (1, 1, 3), (1, 3, 2), (3, 0, 3), & \\
&& (3, 3, 1), (3, 3, 2), (3, 2, 3), (3, 3, 3)  & \\ \hline
$(64,28,12)$  & $\Znum_4 \times \Znum_4 \times \Znum_4$ &
(0, 0, 0), (1, 0, 0), (0, 1, 0), (0, 0, 1), & (1,1,1) \\
&& (2, 0, 0), (0, 2, 0), (1, 1, 0), (1, 0, 1), & \\
&& (3, 0, 0), (1, 0, 2), (0, 1, 1), (0, 1, 2), & \\
&& (2, 0, 1), (2, 2, 0), (1, 3, 0), (3, 0, 2), & \\
&& (0, 3, 1), (0, 1, 3), (2, 3, 0), (0, 2, 3), & \\
&& (3, 1, 2), (3, 2, 1), (3, 0, 3), (1, 2, 3), & \\
&& (0, 3, 3), (2, 3, 2), (2, 2, 3), (3, 3, 2) & \\ \hline

$(64,28,12)$  & $\Znum_4 \times \Znum_4 \times \Znum_4$ &
(0, 0, 0), (1, 0, 0), (0, 1, 0), (0, 0, 1), & (1,1,1) \\
&& (2, 0, 0), (0, 2, 0), (1, 1, 0), (1, 0, 1), & \\
&& (3, 0, 0), (1, 0, 2), (0, 1, 1), (2, 1, 0), & \\
&& (0, 2, 1), (2, 2, 0), (1, 0, 3), (3, 0, 2), & \\
&& (0, 3, 1), (0, 1, 3), (0, 3, 2), (2, 0, 3), & \\
&& (3, 3, 0), (3, 1, 2), (1, 3, 2), (3, 2, 1), & \\
&& (0, 3, 3), (2, 3, 2), (2, 2, 3), (3, 2, 3) & \\ \hline

$(64,28,12)$  & $\Znum_4 \times \Znum_4 \times \Znum_4$ &
(0, 0, 0), (1, 0, 0), (0, 1, 0), (0, 0, 1), & (1,1,1) \\
&& (2, 0, 0), (0, 2, 0), (1, 1, 0), (1, 0, 1), & \\
&& (3, 0, 0), (1, 0, 2), (0, 1, 1), (2, 1, 0), & \\
&& (0, 2, 1), (0, 0, 3), (2, 2, 0), (1, 0, 3), & \\
&& (3, 0, 2), (0, 3, 1), (0, 3, 2), (0, 2, 3), & \\
&& (3, 3, 0), (3, 1, 2), (1, 3, 2), (3, 2, 1), & \\
&& (2, 1, 3), (2, 3, 2), (3, 2, 3), (2, 3, 3) & \\ \hline

$(64,28,12)$  & $\Znum_4 \times \Znum_4 \times \Znum_4$ &
(0, 0, 0), (1, 0, 0), (0, 1, 0), (0, 0, 1), & (1,1,1) \\
&& (2, 0, 0), (0, 2, 0), (1, 1, 0), (1, 0, 1), & \\
&& (3, 0, 0), (1, 0, 2), (0, 1, 1), (2, 1, 0), & \\
&& (0, 2, 1), (0, 0, 3), (2, 2, 0), (1, 3, 0), & \\
&& (1, 0, 3), (3, 0, 2), (0, 3, 2), (0, 2, 3), & \\
&& (3, 1, 2), (3, 2, 1), (2, 3, 1), (2, 1, 3), & \\
&& (0, 3, 3), (2, 3, 2), (3, 3, 2), (3, 2, 3) & \\ \hline

\end{tabular}\label{tab:sporadic}
  \caption{Sporadic difference sets and added elements forming an ADS}
\end{center}
  \end{table}

%%% This turned out to give no interesting new results
%% \section{Projecting difference sets to a subgroup}\label{sec:modw}

%% Theorem~\ref{thm:modw} showed that for a $(v,k,1)$ difference set $D$ with
%% $v \equiv 0 \pmod 3$, reducing the elements of $D$ modulo 3 gives a
%% $(v/3, k-1,2,2k-4)$-ADS.
%% For what other $(v,k,\lambda)$-difference sets is there a divisor $w$ of $v$ such
%% that reducing the difference set modulo $w$ gives a
%% $(w,k',\lambda')$-ADS?  We will exclude the trivial cases $k'=w$ or
%% $k'=w-1$.
%% %, but allow $k'>w/2$, since taking the complement gives an ADS with $k' < w/2$.

%% Trying all difference sets in \cite{ljdsr}, and all proper divisors $w$ of $v$
%% for each one, the only ones that gave an ADS were Singer difference
%% sets with parameters
%%   $$(v,k,\lambda) = (\frac{q^{d+1}-1}{q-1},\frac{q^{d}-1}{q-1},\frac{q^{d-1}-1}{q-1}).$$

%% Theorem~\ref{thm:modw} tells us that $d=2$, $w=3$ works.

%% Another family is
%% \begin{theorem}
%%   For a Singer difference set $D$ with $d=3$ and $q$ odd, reducing modulo
%%   $v/2$ gives a
%%   $(v/2,k-\lambda/2,\lambda/2,\lambda/2-1)$-ADS.
%% %((q^4-1)/(q-1), q^2+q+1,q+1)-DS   -> ((q^4-1)/(q-1)/2, q^2+(q+1)/2,(q+1)/2,(q-1)/2)-ADS
%% \end{theorem}

%% \begin{proof}
%% There will be $\lambda/2$ pairs $x,y\in D$ with $x-y \equiv y-x \equiv v/2 \pmod v$,
%% making a total of $\lambda$ differences $v/2$.  Each of these pairs
%% will get mapped to one element in the reduced set, and all of the
%% other elements map to distinct elements, making a total of
%% $k-\lambda/2$ elements.
%% \end{proof}

\section{Modular Golomb Rulers}\label{sec:mgr}

%% A {\em Golomb ruler} of order $k$ is a set of $k$ distinct integers
%% $d_1 <  d_2< \ldots < d_k$ for which all the differences $d_i - d_j$
%% ($i \neq j$)
%% are distinct. The {\em length} of a Golomb ruler is $d_k - d_1$.

As was stated in the introduction, 
a $(v,k,0,t)$-ADS is also known as a $(v,k)$-{\em modular Golomb
ruler} (MGR).
In \cite{bs2021} 
a number
of necessary conditions are shown for $(v,k)$-MGR to exist for various $v$ and
$k$.

The case $v = k^2-k+2$, equivalent to a $(v,k,0,1)$-ADS, is of
particular interest.  In that paper they observe:

\begin{theorem}
  Let $G = \Znum_{k^2-k+2}$, and $N$ be the subgroup of $G$ of order 2.
  A $(k^2-k+2,k)$-MGR is equivalent to a relative
  $((k^2-k+2)/2,2,k,1)$-difference set in $G$ relative to $N$.
\end{theorem}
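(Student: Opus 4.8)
The plan is to translate both objects into statements about the difference multiset $\Delta(D)=\{d_i-d_j : d_i\neq d_j\in D\}$ and then show the two conditions coincide. First I would record the parameters. Since $k(k-1)$ is always even, $v=k^2-k+2$ is even, so $G=\Znum_v$ has a unique subgroup $N$ of order $2$, namely $N=\{0,v/2\}$. A $(v,k)$-MGR is by definition a $(v,k,0,1)$-ADS: a $k$-subset whose $k(k-1)$ differences are all distinct, so that exactly $t=1$ nonzero element of $G$ is missing from $\Delta(D)$ while the remaining $v-1-t=k(k-1)$ nonzero elements each occur exactly once. A relative $(v/2,2,k,1)$-difference set relative to $N$ is, combinatorially, a $k$-subset for which every element of $G\setminus N$ occurs exactly once in $\Delta(D)$ and the nonidentity element $v/2$ of $N$ occurs not at all; equivalently $D D^{-1}=k+(G-N)$ in $\Znum[G]$, with $\lambda=1$.

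These two descriptions agree except on one point: the MGR condition only says that some single nonzero element is missing, whereas the RDS condition pins that element down to be $v/2$. The key step is therefore to show that for an MGR the missing element is forced to be $v/2$. This rests on the symmetry of $\Delta(D)$: since $d_j-d_i=-(d_i-d_j)$, the multiset $\Delta(D)$ is invariant under $g\mapsto -g$, and hence so is the set of nonzero elements it omits. Now partition the $v-1$ nonzero elements of $\Znum_v$ under negation: the unique fixed point is $v/2$ (the only nonzero solution of $2g=0$), and the remaining $v-2=k(k-1)$ elements fall into $(v-2)/2$ antipodal pairs $\{g,-g\}$. If the single omitted element were some $g\neq v/2$, then $-g\neq g$ would also be omitted, giving at least two omitted elements, a contradiction. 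Hence the omitted element is exactly $v/2$.

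With this in hand both implications are immediate. If $D$ is an MGR, then its $k(k-1)$ distinct differences omit precisely $v/2$ and therefore cover $G\setminus N$ each exactly once, which is the relative difference set condition; equivalently $D D^{-1}=k+(G-N)$. Conversely, if $D$ is a relative $(v/2,2,k,1)$-difference set relative to $N$, then every element of $G\setminus N$ is a difference exactly once and $v/2$ is never a difference, so the $k(k-1)$ differences are distinct and miss exactly the one nonzero element $v/2$; that is precisely a $(v,k,0,1)$-ADS, i.e.\ a $(v,k)$-MGR. The only real content is the parity argument of the previous paragraph; everything else is parameter bookkeeping, and I expect the sole subtlety to be confirming that $v/2$ is genuinely the unique order-two element, so that $N$ is well defined and the negation action has a single fixed point.
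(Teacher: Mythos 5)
Your proposal is correct; the paper itself only quotes this equivalence from Buratti and Stinson without reproducing a proof, and your argument is the standard one for it. The one genuinely necessary idea --- that the difference multiset is closed under negation, so the unique omitted nonzero element of a $(k^2-k+2,k,0,1)$-ADS must be the sole fixed point $v/2$ of $g\mapsto -g$, i.e.\ the nonidentity element of $N$ --- is present and correctly justified, and the remaining parameter checks ($v$ even, $t=1$, $|G\setminus N|=k(k-1)$) all hold.
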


Ryser \cite{ryser1973variants} called cyclic relative difference sets
with $n=2$ {\em near difference sets of type 1}.  His Theorem 2.1
shows

\begin{theorem}
  Let $D=\{d_1,\ldots,d_k\}$ be a cyclic relative $(v,2,k,\lambda)$
  difference set.  Then $v$ even implies that $k-2\lambda$
  is a perfect square, and $v$ odd implies that $k$ is a perfect square.
  Moreover, there exists a $(v,k,2\lambda)$-difference set.
\end{theorem}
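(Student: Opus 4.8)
The plan is to realize the object inside $G=\Znum_{2v}$, take $N=\{0,v\}$ as the forbidden subgroup of order $2$, and handle all three assertions from the single group ring identity together with one carefully chosen character. First I would record the defining equation in $\Znum[G]$. Writing $D$ for $\sum_i d_i$, the relative difference set condition says the $k$ diagonal pairs contribute the identity, the element $v$ (the only nonidentity element of $N$) never occurs as a difference, and every element outside $N$ occurs $\lambda$ times; that is,
$$DD^{-1}=k+\lambda\,(G-N),$$
where I note that the coefficient of the identity is $k=|D|$.

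For the existence of a $(v,k,2\lambda)$-difference set I would pass to the quotient $\pi:G\to G/N\cong\Znum_v$. The map $\pi$ is injective on $D$: if $\pi(d_i)=\pi(d_j)$ with $i\neq j$, then $d_i-d_j\in N\setminus\{0\}=\{v\}$, contradicting that $v$ is a forbidden difference. Hence $\pi(D)$ is a genuine $k$-subset of $\Znum_v$. For any nonzero $\bar g\in G/N$ the two preimages $g$ and $g+v$ both lie outside $N$, and each occurs exactly $\lambda$ times among the differences of $D$, so $\bar g$ occurs $\lambda+\lambda=2\lambda$ times among the differences of $\pi(D)$. Thus $\pi(D)$ is the claimed $(v,k,2\lambda)$-difference set.

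For the two perfect-square assertions I would apply the real character $\chi$ of order two given by $\chi(x)=(-1)^x$, which is well defined on $\Znum_{2v}$ since $2v$ is even. Applying $\chi$ to the identity above and using $\chi(G)=0$ (as $\chi$ is nontrivial) yields
$$|\chi(D)|^2=k-\lambda\bigl(1+\chi(v)\bigr)=k-\lambda\bigl(1+(-1)^v\bigr).$$
The left-hand side equals $(a-b)^2$, where $a$ and $b$ count the even and odd elements of $D$, so it is an integer square. When $v$ is even the right-hand side is $k-2\lambda$, and when $v$ is odd it is $k$, which is exactly the stated dichotomy.

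The computations are entirely routine, so the only real content is the choice of character, which is also the one point I would flag as the crux rather than a genuine obstacle. The key observation is that the parity character $\chi(x)=(-1)^x$ is precisely the order-two character whose restriction to $N$ reads off the parity of $v$, since $\chi(v)=(-1)^v$; this one character simultaneously delivers both perfect-square conclusions, with the case split arising automatically. The only items needing care are the bookkeeping on the identity coefficient in the relative difference set equation and the remark that $\chi(D)$ is a rational integer, both of which are immediate.
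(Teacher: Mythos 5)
Your proof is correct. Note that the paper does not prove this statement at all --- it is quoted as Theorem 2.1 of Ryser's paper on variants of cyclic difference sets --- so there is no in-paper argument to compare against; your route (projecting $D$ to $G/N\cong\Znum_v$ to get the $(v,k,2\lambda)$-difference set, and applying the unique order-two character of $\Znum_{2v}$ to the group-ring identity to extract $(a-b)^2=k-\lambda\bigl(1+(-1)^v\bigr)$) is exactly the standard one, equivalent to Ryser's evaluation of the Hall polynomial at $x=-1$ and its reduction modulo $x^v-1$. One minor point in your favor: you correctly use $k$ as the identity coefficient in $DD^{-1}=k+\lambda(G-N)$, where the paper's displayed definition of a relative difference set misprints it as $n$.
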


Combining these results, we have that a $(v,k,0,1)$-ADS (and so the
corresponding $(k^2-k+2,k)$-MGR) exists only if a cyclic $(v,k,2)$-difference
set exists.

As mentioned in the previous section, these biplanes are very rare.
From the trivial $(2,2,2)$-  and $(4,3,2)$-difference sets we get
$(4,2)$- and $(8,3)$-MGRs, and the $(14,4)$-MGR from the $(7,4,2)$-difference set.
The $(11,5,2)$- and $(37,9,2)$-difference sets
do not lift to relative difference sets \cite{lam}.
The six exceptional cases in Table 3 of \cite{gordon2022difference})
all have $v$ odd and $k$ not a square, so there
are no other $(k^2-k+2,k)$-MGRs with $k \leq 10^{10}$.

Furthermore, Arasu et al.  \cite{ajmp} conjectured that the only
$(m,n,k,\lambda)$-relative difference
sets with $n=2$ are lifts of difference sets with the parameters of
complements of classical Singer difference sets.  That
conjecture would imply that there are no other such MGR.

%% \begin{changemargin}{-4.5cm}{-4.5cm} 
%% \input{ads_tab.tex}
%% \end{changemargin} 

Buratti and Stinson \cite{bs2021}
did exhaustive searches for $(v,k)$-MGR for all 
$k \leq 11$.  They used the following Lemma to bound the search.

\begin{lemma}
  Suppose there is a Golomb ruler of order $k$ and length $L$.  Then
  there is a $(v,k)$-MGR for all $v \geq 2L+1$.
\end{lemma}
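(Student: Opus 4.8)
The plan is to take the Golomb ruler itself, reduced modulo $v$, as the desired MGR, and to verify that the hypothesis $v \ge 2L+1$ prevents any two of its differences from colliding. Since translating a Golomb ruler preserves all of its differences, I would first normalize so that $d_1 = 0$ and $d_k = L$, and regard $D = \{d_1, \ldots, d_k\}$ as a $k$-subset of $\Znum_v$.

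Over the integers the $k(k-1)$ signed differences $d_i - d_j$ ($i \ne j$) are pairwise distinct: the positive ones are distinct by the defining property of a Golomb ruler, the negative ones are their negatives and hence also distinct, and no positive difference can equal a negative one. Moreover every such difference lies in the interval $[-L, L]$.

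The heart of the argument is a bounding estimate. Suppose two signed differences $\delta_1 = d_{i_1} - d_{j_1}$ and $\delta_2 = d_{i_2} - d_{j_2}$ become equal modulo $v$; I must show the index pairs coincide. Congruence means $v \mid (\delta_1 - \delta_2)$. Since $\delta_1, \delta_2 \in [-L, L]$, their difference satisfies $|\delta_1 - \delta_2| \le 2L < v$, so divisibility by $v$ forces $\delta_1 - \delta_2 = 0$, i.e.\ $\delta_1 = \delta_2$ as integers. By the distinctness of the integer differences noted above, this gives $(i_1, j_1) = (i_2, j_2)$. Hence all $k(k-1)$ differences of $D$ are distinct in $\Znum_v$, which is precisely the statement that $D$ is a $(v,k)$-MGR.

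The only delicate point is the sharpness of the bound $v \ge 2L+1$: the extremal pair $\delta_1 = d_k - d_1 = L$ and $\delta_2 = d_1 - d_k = -L$ has $\delta_1 - \delta_2 = 2L$, so if one merely assumed $v = 2L$ these two distinct differences would coincide modulo $v$ and the construction would fail. Thus the strict inequality $2L < v$ is exactly what the argument needs. I expect no genuine obstacle beyond pinning down this boundary constant; the remaining combinatorics is immediate.
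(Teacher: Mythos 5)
Your proof is correct: reducing a normalized Golomb ruler modulo $v$ and observing that all signed differences lie in $[-L,L]$, so that two of them can only agree modulo $v\ge 2L+1$ if they agree over $\Znum$, is exactly the intended argument. The paper itself states this lemma without proof, quoting it from Buratti and Stinson, and your write-up (including the remark on sharpness at $v=2L$) fills in precisely the standard justification.
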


Lengths of optimal Golomb rulers for small $k$ are given in, for
example, \cite{shearer200619difference}.

% Lengths of optimal Golomb rulers
%1, 3, 6, 11, 17, 25, 34, 44, 55, 72, 85, 106, 127, 151, 177, 199, 216, 246, 283, 333, 356, 372, 425, 480, 492, 553, 585
% this means that the point that we know MGRs exist for the new k's are:
%
% k        L (ruler length)       v
% 12           85                    171
% 13          106                   213
% 14          127                   255
% 15          151                   303

Similarly to \cite{bs2021}, we performed exhaustive backtracking
searches for $(v,k)$-MGR for given $k$ up to the Golomb
ruler bound.  They define
$$
\mgr{k} = \{ v : {\rm there \ exists \ a\  } (v,k){\rm -MGR}\},
$$
and determine $\mgr{k}$ for $k \leq 11$.

We used the canonicity test of Haanp{\"a}{\"a}
\cite{haanpaa2004constructing}, at each step, testing whether the
current set is lexicographically least under mappings of the form
$f(x)=ax+b \pmod v$, where $a,b \in \Znum_v$ and $\gcd(a,v)=1$, and
backtracking if not.
%% No details about the computation in \cite{bs2021} were given, so we
%% don't know if their search used this or a similar speedup.  However,
With this speedup 
we were able to confirm their results in a few minutes, and
complete several more cases:

\begin{itemize}
\item $\mgr{12} = \{133\} \cup \{156\} \cup \{158,159\} \cup  \{v: v \geq 161\}$
\item $\mgr{13} = \{168\} \cup \{183\}  \cup  \{v: v \geq 193\} $
\item $\mgr{14} = \{183\}   \cup  \{v: v \geq 225\} $
\item $\mgr{15} = \{255\}     \cup \{v: v \geq 267\} $
\end{itemize}

\section{Computational Results for general ADS}\label{sec:comp}

In \cite{zlz2006}, Zhang, Lei
and Zhang use some necessary conditions and extensive computations
to investigate ADS existence.
They provide a table of the parameters with $v \leq 50$ for which no
ADS exist.
There were a few errors in the table; the missed ADS are listed in
%Table~\ref{tab:errata}.    %% LaTeX was messing up the numbering
Table~II.

%Figure~\ref{fig:tab}      %% LaTeX was messing up the numbering
Figure~1
shows the results of exhaustive searches for most
$(v,k)$ with $v \leq 63$.  The canonicity tests used above for MGRs
were too costly in these cases, and did not eliminate enough of the
search.  Instead, the Gray code for fixed-density necklaces from
\cite{sawada2013gray} was used.  A fixed-density necklace is a
representative of a $k$-subset of a $v$-set, where every cyclic
rotation of a given set is represented exactly once.

Using the Gray code for necklaces avoided repeating cyclic shifts of a
set, although multiples mod $v$ were repeated.  However, the Gray code
is constant amortized time (CAT), so only a small constant time was
necessary to step from one necklace to the next.  Since it was a Gray
code, each necklace differs from the previous one by only one or two
swaps, so maintaining the autocorrelation sequence (and keeping track
of whether a set is an ADS) is very efficient.  

There is a definite pattern to the parameters where an ADS does not
exist.  Define
%% $$
%% \hatt = \left\{
%% \begin{array}{ll}
%%   t & t < (v-1)/2 \\
%%   t-(v-1) & t \geq (v-1)/2
%% \end{array} \right.
%% $$
$$
\hatt = \min(t,v-1-t).
$$
Then $\hatt$ is small when
the number of differences appearing $\lambda$ and $\lambda+1$ times
are very unbalanced, and close to $v/2$ when they are relatively
balanced.  For a given column (i.e. fixed value of $k$), $\hatt$ will
be close to zero near the point where
$$
\lambda = \left\lfloor \frac{k(k-1)}{v-1} \right\rfloor
$$
changes.

Empirically, that is precisely where an ADS is less likely to exist.  For a given
$k$, increasing $v$ by one increases $\hatt$ by $\lambda$ until it
reaches a maximum near $v/2$, then decreasing until the next time
$\lambda$ changes, finally settling to $k(k-1)$ for $\lambda=0$.  The
faster change may explain why the parameters without an ADS form long
vertical lines on the left (where $\lambda$ is small), and shorter
lines towards the right.

We do not have a theorem, or even a good heuristic, to explain why
this would be so.
There are exceptions; many difference sets (with
$\hatt=0$) do exist, as do occasional ones with small but positive
$\hatt$, e.g. the $(56,17,4,3)$-ADS.  The largest $\hatt$ in the table
for which an
ADS does not exist is 12, for $(50,18,6,37)$ and $(53,17,5,40)$.

Finally, we note that
\cite{adhkm2001} ends with ``It is an open question whether
$(v,\frac{v-1}{2},\lambda,t)$ almost difference sets exist for all odd $v$'',
and this has been repeated in other places, such as \cite{ding}.  This
should add the condition that $t \neq v-1$, since it is well known that
$(v,\frac{v-1}{2},\lambda)$-difference sets do not exist for
$(39,19,9)$, $(51,25,12)$ because of the Mann test (see Theorem
VI.6.2 of \cite{bjl}), and many other
such parameters.  The first open case is $(61,30,14,30)$.

\begin{figure}\label{fig:tab}
%  \hspace*{-1in}
\input{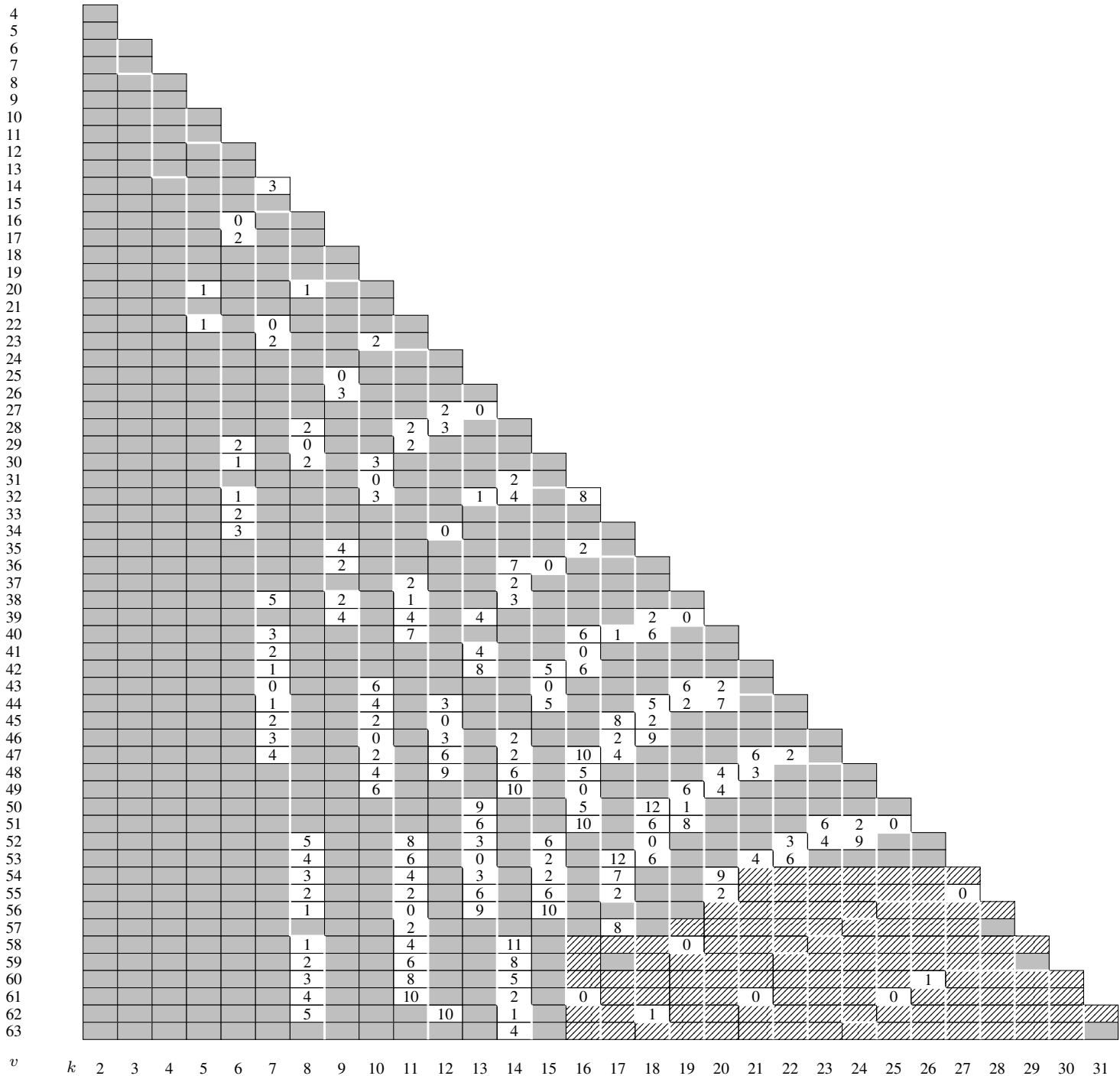}
  \caption{$(v,k)$-ADS search results: dark squares exist, white squares
    (which have the value of $\hatt$ inset) do not, dashed squares took
    too long to complete the exhaust. Thick black lines divide regions with
    different $\lambda$ values.}
\end{figure}

\begin{table}
\begin{center}
\begin{tabular}{|c|c|c|c|c|}
\hline
$v$ & $k$ & $\lambda$ & $t$ & ADS  \\ \hline
39 & 17 & 7  & 32 & $\{1,2,3,5,9,13,16,19,21,22,24,26,27,28,31,32,33\}$  \\
48 & 17 & 5  & 10 &  $\{1,2,3,5,7,9,10,16,17,18,21,24,27,29,30,34,39\}$ \\
48 & 22 & 9  & 8 &  $\{1,2,3,5,6,13,19,20,21,24,25,27,28,29,31,33,34,37,39,40,42,44\}$ \\
48 & 23 & 10  & 11 & $\{1,2,3,4,6,7,9,10,11,15,17,18,20,22,24,25,28,29,30,34,37,40,41\}$  \\
50 & 20 & 7  & 12 &  $\{1,2,3,5,7,8,10,12,17,18,20,21,24,25,28,29,31,37,42,43\}$ \\

 \hline

\end{tabular}
\caption{ADS missed in \cite{zlz2006}}
\label{tab:errata}
\end{center}
  \end{table}

\printbibliography

%\begin{appendices}
%\section{Octic almost difference sets}
\appendix[Octic almost difference sets]

To show when octic residues, with or without $0$, form an almost
difference set, we must use cyclotomy.  See
\cite{berndt1998gauss} or \cite{lehmer1955number} 
for background on cyclotomy, and
\cite{hall1998combinatorial} for application of cyclotomy to difference
sets.

For an integer $e>1$ and prime power $q=ef+1$, let $g$ be a generator
for $GF(q)^*$, and 
define the cyclotomic classes $C_i^{(q,e)} = C_i$, for
$i=0,1,\ldots,e-1$ by
$$
C_i = \{ g^{es+i}, s = 0,1,\ldots,f\}.
$$
for $q$ prime $C_0$ is the set of $e$th power residues modulo $q$.

The {\em cyclotomic numbers of order $e$} are:

$$
(i,j)_e = |\{(x,y): x \in C_i, y \in C_j, x+1=y\}|
$$
for $i,j=0,1,\ldots,e-1$.

Computing cyclotomic numbers is in general difficult, but it has been
done for various small values of $e$.  For $e=8$  \cite{lehmer1955number}, let $p = 8f+1$ be a
prime with the representation
$$p = x^2 + 4y^2 = a^2 + 2b^2,$$
where $x \equiv a \equiv 1 \pmod 4$.

if $p\equiv 1 \pmod
{16}$, the matrix of cyclotomic numbers is:
$$
\left[
\begin{array}{cccccccc}
A & B & C & D & E & F & G & H \\
B & H & I & J & K & L & M & I \\
C & I & G & M & N & O & N & J \\
D & J & M & F & L & O & O & K \\
E & K & N & L & E & K & N & L \\
F & L & O & O & K & D & J & M \\
G & M & N & O & N & J & C & I \\
H & I & J & K & L & M & I & B \\
\end{array} \right]
$$
where the coefficients are given by 
  \begin{eqnarray*}
    64A & = & p -23-18x-24a, \\
    64B & = & p -7+2x+4a+16y+16b, \\
    64C & = & p -7+6x+16y, \\
    64D & = & p -7+2x+4a-16y+16b, \\
    64E & = & p -7 -2x+8a, \\
    64F & = & p -7+2x+4a+16y-16b, \\
    64G & = & p -7+6x -16y, \\
    64H & = & p -7+2x+4a-16y-16b, \\
    64I & = & p +1+2x - 4a, \\
    64J & = & p +1 -6x + 4a, \\
    64K & = & p +1 +2x - 4a, \\
    64L & = & p +1  +2x - 4a, \\
    64M & = & p +1  -6x + 4a, \\
    64N & = & p +1  -2x, \\
    64O & = & p +1  +2x - 4a \\
  \end{eqnarray*}
for 2 a quartic residue $\pmod p$, and

  \begin{eqnarray*}
   64A  & = &  p  - 23 +6x, \\
   64B  & = &  p  -7 + 2x + 4a, \\
   64C  & = &  p  -7 - 2x -8a-16y, \\
   64D  & = &  p  -7 + 2x + 4a, \\
   64E  & = &  p  -7 -10x, \\
   64F  & = &  p  -7 + 2x + 4a, \\
   64G  & = &  p  -7 - 2x -8a + 16y, \\
   64H  & = &  p  -7 + 2x + 4a, \\
   64I  & = &  p  +1 - 6x + 4a, \\
   64J  & = &  p  +1 + 2x - 4a - 16b, \\
   64K  & = &  p  + 1 + 2x - 4a+16y, \\
   64L  & = &  p  + 1 +2x - 4a-16y, \\
   64M  & = &  p  + 1 + 2x - 4a+16b, \\
   64N  & = &  p  +1 + 6x+8a, \\
   64O  & = &  p  + 1 - 6x + 4a \\
  \end{eqnarray*}
for 2 not a quartic residue.

For $p \equiv 9 \pmod {16}$, the cyclotomic numbers are:
$$
\left[
\begin{array}{cccccccc}
A & B & C & D & E & F & G & H \\
I & J & K & L & F & D & L & M \\
N & O & N & M & G & L & C & K \\
J & O & O & I & H & M & K & B \\
A & I & N & J & A & I & N & J \\
I & H & M & K & B & J & O & O \\
N & M & G & L & C & K & N & O \\
J & K & L & F & D & L & M & I \\
\end{array} \right]
$$
where
  \begin{eqnarray*}
    64A & = & p -15-2x, \\
    64B & = & p +1+2x-4a+16y, \\
    64C & = & p + 1+6x+8a-16y, \\
    64D & = & p + 1+2x-4a-16y, \\
    64E & = & p + 1 - 18x, \\
    64F & = & p + 1 +2x - 4a + 16y, \\
    64G & = & p + 1  +6x +8a + 16y, \\
    64H & = & p + 1  +2x - 4a - 16y, \\
    64I & = & p -7  +2x + 4a, \\
    64J & = & p -7  +2x + 4a, \\
    64K & = & p + 1  -6x + 4a + 16b, \\
    64L & = & p + 1  +2x - 4a, \\
    64M & = & p + 1  -6x + 4a - 16b, \\
    64N & = & p - 7  -2x - 8a, \\
    64O & = & p + 1  +2x - 4a \\
  \end{eqnarray*}
for 2 a quartic residue $\pmod p$, and

  \begin{eqnarray*}
   64A  & = &  p  - 15 - 10x - 8a, \\
   64B  & = &  p  + 1 + 2x - 4a - 16b, \\
   64C  & = &  p  + 1 - 2x + 16y, \\
   64D  & = &  p  + 1 + 2x - 4a - 16b, \\
   64E  & = &  p  + 1 + 6x + 24a, \\
   64F  & = &  p  + 1 + 2x - 4a + 16b, \\
   64G  & = &  p  + 1 - 2x - 16y, \\
   64H  & = &  p  + 1 + 2x - 4a + 16b, \\
   64I  & = &  p  - 7 + 2x + 4a + 16y, \\
   64J  & = &  p  - 7 + 2x + 4a - 16y, \\
   64K  & = &  p  + 1 + 2x - 4a, \\
   64L  & = &  p  + 1 - 6x + 4a, \\
   64M  & = &  p  + 1 + 2x - 4a, \\
   64N  & = &  p  - 7 + 6x, \\
   64O  & = &  p  + 1 - 6x + 4a \\
  \end{eqnarray*}
for 2 not a quartic residue.

Lehmer (\cite{lehmer}, Theorem 1) shows that for the octic residues to
form a
$(p,k,\lambda)$-difference set, we must have
$$(i,0) = \lambda, \ i = 0,1,\ldots 7.$$
This can only happen for $p \equiv 9 \pmod {16}$, since in the other
case Lemma 1 of that paper shows that one of the $(i,0)$ is odd, and
the rest even.  For ADS we will need to consider both cases.

For an octic difference set we need $A = I = N = J$, and solving the
equations gives $a=1$, $x= -3$, so
\begin{equation}\label{octrep}
p = 9 + 4y^2 = 1 + 2 b^2.  
\end{equation}

How do we know if there are any such numbers?   We may rewrite
(\ref{octrep}) as
\begin{equation}\label{normeq}
b^2 - 2 y^2 = 4.  
\end{equation}

Note that both $b$ and $y$ must be even.
(\ref{normeq}) says that in the field
$K = \Qnum(\sqrt{2})$, the number $b+\sqrt{2} y$ has norm 4
(see, for example, \cite{cohn1980advanced} for facts about quadratic number fields).
$K$ has fundamental unit $u = 1+\sqrt{2}$ with norm $-1$.
The prime $2$ ramifies in $K$, so the elements of norm 4 are of the
form $2 u^{2i}$.  For example, $2 u^2 = 6 + 4 \sqrt{2}$, and
$73 =  3^2 + 4 \cdot 4^2 = 1^2 + 2 \cdot 6^2$, so the octic residues modulo 73
form a difference set.  The next one is
$2 u^{14}=  228486 + 161564 \sqrt{2}$, giving
$104411704393 = 3^2 + 4 \cdot 161564^2 = 1^2 + 2 \cdot 228486^2$
It is an open question whether there are an infinite number of these,
since
we do not know if this sequence yields primes infinitely often.
A naive heuristic argument suggests that it does, but very slowly,
with the number of such primes up
to $x$ being ${\cal{O}}(\log \log x)$.

Consider the case of almost difference sets for $p \equiv 9 \pmod {16}$.
Now we need some subset of $\{A,I,N,J\}$ being equal to $\lambda$, and
the rest $\lambda+1$.  Checking all the possibilities gives the
solutions in Table~\ref{tab:octic}.
The results for ADS with $t=(v-1)/2$, such as
Theorems 6.6.5 and 6.6.6 of \cite{cusick2004stream},
gave lines 3 and 4 in the table, 
which were shown not to have any solutions in \cite{qi2016nonexistence}.

\begin{table}
\begin{center}
\begin{tabular}{|c|c|c|c|}
\hline
$a$ & $x$ & norm($b^2-2y^2$) & factors  \\ \hline
-7 & 21  & 196  & $2^2 \cdot 7^2$  \\
9 & -11  & 20  & $2^2 \cdot 5$  \\
1 & 13  & 84  & $2^2 \cdot 3 \cdot 7$ \\
1 & -19  & 180  & $2^2 \cdot 3^2 \cdot 5$  \\
-7 & 5  & -12  &  $-2^2 \cdot 3$ \\
9 & -27  & 324  &  $2^2 \cdot 3^4$ \\
 \hline
\end{tabular}
  \caption{Solutions to Type $O$ ADS cyclotomic equations, $p \equiv 9
    \pmod{16}$, 2 a quartic residue}
  \label{tab:octic}
\end{center}
  \end{table}

3 and 5 are both inert in $K$, while $7 = (1+2\sqrt{2}) (-1+2\sqrt{2})$.
Thus there are no elements with norm 3 or 5, meaning that
none of the middle lines can have solutions.  For the last line, the
only element of norm $2^2 3^4$ is 18, which will lead to $b$ and $y$
always having factors of 3.  Since $a$ and $x$ do as well, none of the
values will be prime.

For the first line,
we may take $b+\sqrt{2}y$ to be either
$2(1-2\sqrt{2})u^{2i}$ or  $2(1+2\sqrt{2})u^{2i}$.  Both lead to solutions, the
first starting with  $p=26041$, the second with $p=660279756217$.

Solving the equations for 2 a quartic nonresidue, the only
solution we get is $p=41$.

For type $O_0$, the addition of $\{0\}$ will increase whichever
$(i,0)_8$ contain $1$ and $-1$.  $1$ is in $C_0$, so $A=(0,0)_8$ will
increase by 1.  If $f$ is odd, then $-1 \equiv g^{4f}$ is in $C_4$, so
$(4,0)_8$ (which is also equal to $A$) increases by 1.  If $f$ is even,
then $-1 \in C_0$, so $A$ increases by 2.

Going through all the possibilities with these new equations, we get
the solutions in Table~\ref{tab:oplus}.

\begin{table}
\begin{center}
\begin{tabular}{|c|c|c|c|}
\hline
$a$ & $x$ & norm($b^2-2y^2$) & factors  \\ \hline
-15 & 45  & 900  & $2^2 \cdot 3^2 \cdot 5^2$  \\
1 & 13  & 84  & $2^2 \cdot 3 \cdot 7$ \\
-7 & 37  & 660  & $2^2 \cdot 3  \cdot 5  \cdot 11$  \\
-7 & 5  & -12  &  $-2^2 \cdot 3$ \\
-15 & 29  & 308 & $2^2 \cdot 7 \cdot 11$  \\
1  & -3  & 4  &  $2^2$ \\
 \hline
\end{tabular}
  \caption{Solutions to $O_0$ ADS cyclotomic equations, $p \equiv 9
    \pmod{16}$, 2 a quartic residue}
  \label{tab:oplus}
\end{center}
  \end{table}

11 is also inert in $K$, so there are no solutions for lines with a
single power of 3, 5 or 11.  The first line has no prime solutions
because of a common factor of 5.
The last line, $a=1$, $x=-3$, corresponds to the ADS
gotten in Theorem~\ref{thm_octic} by adding $\{0\}$ to a type O
difference set.

Solving the same equations for 2 a quartic nonresidue, the only
solution we get is again $p=41$.  So there is a Type $O$
$(41,5,0,20)$-ADS and a Type $O_0$
$(41,6,0,10)$-ADS.

In the same way, all the possibilities with $p \equiv 1 \pmod {16}$
were tried, for 2 a quartic residue or not.  The only solution was a $(17,2,0,14)$-ADS.

Thus, aside from 17 and 41, all Type $O$ and $O_0$ ADS arise from
Theorem~\ref{thm_octic}, and 
we have the following classification of octic residue almost
difference sets:

\begin{theorem}\label{thm:typeO}
  A Type $O$ ADS exists in $GF(p)$ if and only if $p$ is 17, 41, or a prime such that
$p = 8 t^2 + 49 = 64 u^2 + 441$, where $t$ is odd and $u$ is even.
\end{theorem}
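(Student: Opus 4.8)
The plan is to prove both directions by first translating the ADS condition into a statement purely about the column-$0$ cyclotomic numbers of order $8$, and then reducing the resulting Diophantine problem to a norm equation in $K=\Qnum(\sqrt2)$. First I would record the basic reduction: for $D=C_0$ the octic residues, the number of times a fixed nonzero $g\in C_s$ is represented as a difference of two elements of $D$ equals the cyclotomic number $(s,0)_8$; this follows by scaling a representation by $g^{-1}\in C_{-s}$ together with the standard symmetry $(i,j)=(e-i,j-i)$, which gives $(-s,-s)=(s,0)$. Hence $D$ is an ADS exactly when the eight numbers $(s,0)_8$, $s=0,\dots,7$, take only the two values $\lambda$ and $\lambda+1$. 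Reading off column $0$ of the matrices displayed above, for $p\equiv 9\pmod{16}$ these are the four quantities $A,I,N,J$, each occurring twice (the doubling reflects $-1\in C_4$, so that $(s,0)=(s+4,0)$), while for $p\equiv 1\pmod{16}$ one gets the analogous, larger list $A,B,C,D,E,F,G,H$. The ADS condition thus becomes: some nonempty proper subset of the distinct column-$0$ values equals $\lambda$, and the rest equal $\lambda+1$.

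Next I would carry out the finite case analysis. For each of the four regimes ($p\equiv 1$ or $9\pmod{16}$, and $2$ a quartic residue or not) and for each way of splitting the distinct column-$0$ values into a $\lambda$-block and a $(\lambda+1)$-block, substituting Lehmer's formulas for these quantities in terms of $p,x,y,a,b$ turns each equation ``(value)$-$(value)$=1$'' into a linear relation that collapses to a fixed pair $(a,x)$. This is precisely the enumeration summarized in Tables~\ref{tab:octic} and~\ref{tab:oplus} for $p\equiv 9\pmod{16}$; I would run the same bookkeeping for $p\equiv 1\pmod{16}$. The output is a finite list of candidate pairs $(a,x)$, each carrying a required value $c=(x^2-a^2)/2$ of $b^2-2y^2$, obtained by equating the two representations $p=x^2+4y^2=a^2+2b^2$.

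I would then sieve the candidates arithmetically in $K$. Writing the compatibility constraint as $b^2-2y^2=c$ identifies $b+\sqrt2\,y$ with an element of norm $c$; using that $2$ ramifies, $7$ splits, and $3,5,11$ are inert, with fundamental unit $\varepsilon=1+\sqrt2$, each line either has no element of the required norm (the lines whose norm carries an odd power of an inert prime), or forces a common factor on every solution that makes $p$ composite, or yields a genuine infinite family of generators. The surviving line for $p\equiv 9\pmod{16}$ with $2$ a quartic residue is $a=-7,\ x=21$; substituting back gives $p=49+2b^2=441+4y^2$, and writing $b=2t,\ y=4u$ rewrites this as $p=8t^2+49=64u^2+441$, with the parities $t$ odd and $u$ even read off from the admissible conjugate branches $2(1\pm2\sqrt2)\varepsilon^{2i}$. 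The quartic-nonresidue subcase and the entire $p\equiv 1\pmod{16}$ case then contribute only the sporadic primes $41$ and $17$ respectively.

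Finally I would close the ``if'' direction. The family $p=8t^2+49=64u^2+441$ with $t$ odd and $u$ even is exactly Hall's list of type $O_0$ difference-set primes, so Theorem~\ref{thm_octic} (removing $\{0\}$ from a type $O_0$ difference set) shows the octic residues alone form an ADS; and for $p=17,41$ one checks directly that the $(p-1)/8$ octic residues give the $(17,2,0,14)$- and $(41,5,0,20)$-ADS. I expect the main obstacle to be the arithmetic sieving step: deciding, line by line, not merely whether the norm equation is solvable but whether it has \emph{prime} solutions, which is where the inertness of $3,5,11$ and the forced common factors do the real work, and where the delicate parity conditions on $t$ and $u$ must be extracted from the two solution branches.
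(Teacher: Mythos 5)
Your proposal follows essentially the same route as the paper's appendix: reduce the ADS condition to the column-$0$ cyclotomic numbers of order $8$ via Lehmer's formulas, enumerate the finitely many ways of splitting those values into $\lambda$- and $(\lambda+1)$-blocks to get candidate pairs $(a,x)$ (the paper's Tables of solutions), sieve the resulting norm equations $b^2-2y^2=(x^2-a^2)/2$ using the splitting behaviour of $2,3,5,7,11$ in $\Qnum(\sqrt2)$, and close the ``if'' direction with Theorem~\ref{thm_octic} plus the sporadic checks at $p=17$ and $p=41$. The argument is correct and matches the paper's proof in structure and in all essential details.
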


\begin{theorem}\label{thm:typeO0}
  A Type $O_0$ ADS exists in $GF(p)$ if and only if $p$ is 41 or a prime such that
$p = 8 t^2 + 1 = 64 u^2 + 9$, where $t$ and $u$ are odd.
\end{theorem}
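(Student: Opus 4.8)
The plan is to prove both implications using the cyclotomic apparatus developed above together with Theorem~\ref{thm_octic}. For the \emph{if} direction, observe that $p = 8t^2+1 = 64u^2+9$ with $t,u$ odd is precisely Hall's defining condition for a type $O$ difference set, so for such $p$ the octic residues $C_0$ already form a $(p,k,\lambda)$-difference set; moreover $64u^2+9 \equiv 9 \pmod{16}$, so $-1$ is an octic nonresidue and no two residues sum to zero. Theorem~\ref{thm_octic} then yields that $\{0\}\cup C_0$ is an almost difference set. The remaining value $p=41$ I would dispatch directly: $C_0$ has five elements, and one checks that $\{0\}\cup C_0$ realizes exactly the two autocorrelation values $\lambda$ and $\lambda+1$, producing the $(41,6,0,10)$-ADS. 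Since $p=41$ is the lone survivor of the $2$ a quartic nonresidue equations, it also drops out of the systematic analysis below.

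For the \emph{only if} direction I would first reduce to $p \equiv 1 \pmod 8$, so $p \equiv 1$ or $9 \pmod{16}$, and treat the two cases separately. In either case the number of representations of an element of $C_i$ as a difference of two octic residues is the cyclotomic number $(i,0)_8$, and the matrices above show that the eight values $(0,0),\dots,(7,0)$ collapse to $A,I,N,J$, each occurring twice. Adjoining $0$ introduces the differences $c$ and $-c$ for every $c\in C_0$. Since $1\in C_0$ the count on $C_0$ rises; and because $-1\in C_4$ when $f$ is odd but $-1\in C_0$ when $f$ is even, the $f$-even case forces the counts on $C_0$ and $C_4$ to differ by $2$ and hence admits no ADS, while the $f$-odd case raises $A$ uniformly to $A+1$ and leaves $I,N,J$ fixed. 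The genuine-ADS requirement (both $\lambda$ and $\lambda+1$ actually attained, which also separates this from the difference-set case) then says the multiset $\{A+1,I,N,J\}$ must consist of only the two consecutive values $\lambda$ and $\lambda+1$.

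Enumerating every way of partitioning $\{A+1,I,N,J\}$ into a $\lambda$-part and a $(\lambda+1)$-part, and substituting the closed forms $64A = p-15-2x$, $64I = p-7+2x+4a$, and so on, I would reduce each partition to a linear relation in $x$ and $a$, and thence to one of the six norm values recorded in Table~\ref{tab:oplus} (with the parallel list arising when $2$ is a quartic nonresidue). Each line then asks for an element $b+\sqrt2\,y$ of a prescribed norm in $K=\Qnum(\sqrt2)$, equivalently a solution of $b^2-2y^2$ equal to that norm.

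The crux, and the main obstacle, is the number-field elimination. Using that $3$, $5$, and $11$ are inert in $K$, any prescribed norm divisible by one of these primes to an odd power is unattainable, which kills the lines with a single power of $3$, $5$, or $11$; a common-factor argument (the norm $2^2\cdot 3^2\cdot 5^2$ forcing a shared factor of $5$, and $2^2\cdot 3^4$ forcing factors of $3$ throughout) removes the remaining non-generic line by showing its solutions are never prime. The single surviving line $a=1,\ x=-3$ corresponds to the elements of norm $4$ in $K$, namely the powers $2\varepsilon^{2i}$ of the fundamental unit $\varepsilon=1+\sqrt2$, and unwinds to $p = 8t^2+1 = 64u^2+9$ with $t,u$ odd. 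Repeating the identical bookkeeping for the $2$ a quartic nonresidue tables contributes only $p=41$, and the two $p\equiv 1\pmod{16}$ tables yield no type $O_0$ example, completing the classification. The delicate points to get right are that the subset enumeration is genuinely exhaustive (including the degenerate assignments and the quick $f$-even exclusion), and that each discarded line is shown to contain no prime rather than merely no element of the right norm.
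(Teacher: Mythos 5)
Your overall strategy is the paper's: the \emph{if} direction via Theorem~\ref{thm_octic} plus a direct check at $p=41$, and the \emph{only if} direction by enumerating which subsets of the first column of the order-$8$ cyclotomic matrix can equal $\lambda$ versus $\lambda+1$, reducing each case to a norm equation $b^2-2y^2=N$ in $\Qnum(\sqrt2)$, and eliminating all but $a=1$, $x=-3$ using the inertness of $3$, $5$, $11$ and common-factor arguments. That part matches Table~\ref{tab:oplus} and the surrounding discussion.

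However, there is a genuine gap in your dismissal of the $p\equiv 1\pmod{16}$ (i.e.\ $f$ even) case. You argue that adjoining $0$ forces the counts on $C_0$ and $C_4$ to differ by $2$, but that reasoning silently assumes $(0,0)_8=(4,0)_8$ before $0$ is added. That identity holds only for $p\equiv 9\pmod{16}$, where the first column of the matrix reads $A,I,N,J,A,I,N,J$; for $p\equiv 1\pmod{16}$ the first column is $A,B,C,D,E,F,G,H$ with all eight entries distinct, so after adding $0$ the counts are $A+2,B,\ldots,H$ and nothing forces $(0,0)$ and $(4,0)$ apart by $2$. (Your earlier sentence that the eight values ``collapse to $A,I,N,J$, each occurring twice'' likewise applies only to the $9\pmod{16}$ matrix.) Lehmer's parity lemma does not dispatch this case for ADS either, since one odd value among seven even ones is perfectly consistent with two consecutive values $\lambda,\lambda+1$. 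The paper therefore runs the same exhaustive partition-and-norm analysis on the eight quantities $A+2,B,\ldots,H$ for both quartic-residue classes of $2$, finding only the $(17,2,0,14)$ example, which is Type $O$ rather than Type $O_0$. Without that enumeration your proof does not exclude a Type $O_0$ ADS with $p\equiv 1\pmod{16}$, so the ``only if'' direction is incomplete as written.
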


%\end{appendices}

\end{document}